\newtheorem{theorem}{Theorem}
\newtheorem{lemma}[theorem]{Lemma}
\newtheorem{proposition}[theorem]{Proposition}
\numberwithin{equation}{section}
\numberwithin{theorem}{section}
\theoremstyle{definition}
\newtheorem{definition}[theorem]{Definition}
\newtheorem{observation}[theorem]{Observation}
\begin{document}

\title[Bohr neighborhoods]{Bohr neighborhoods in three-fold difference sets}

\author{John T. Griesmer}
\address{Department of Applied Mathematics and Statistics\\ Colorado School of Mines, Golden, Colorado}
\email{jtgriesmer@gmail.com}


\begin{abstract}
  Answering a question of Hegyv{\'a}ri and Ruzsa, we show that if $A$ is a set of integers having positive upper Banach density, then the set $A+A-A:=\{a+b-c: a, b, c\in A\}$ contains Bohr neighborhoods of many elements of $A$, where the radius and dimension of the Bohr neighborhood depend only on $d^{*}(A)$.
\end{abstract}

\maketitle

\section{Introduction}\label{secIntroduction}  For a real number $t$, $\|t\|$ denotes distance to the nearest integer. If $k\in \mathbb N$, $\eta>0$, and $s_{1},\dots, s_{k}$ are real numbers, then a \emph{Bohr-$(k,\eta)$ set} is a set of  integers of the form $\{n: \max_{i\in \{1,\dots,k\}} \|s_{i}n\|<\eta\}$. A Bohr-$(k,\eta)$ neighborhood of $n$ is a set of the form $n+U$, where $U$ is a Bohr-$(k,\eta)$ set.  The parameters $\eta$ and $k$ are called the \emph{radius} and \emph{dimension} of $U$, respectively.

Let $d^{*}(A)$ denote the upper Banach density of a set $A\subseteq \mathbb Z$. Theorem 2.2 of \cite{HegyvariRuzsa} says that if $d^{*}(A)>0$, then $A+A-A$ is a Bohr neighborhood of many $a\in A$.  The proof therein does not specify parameters $k, \eta$ for the Bohr neighborhood in terms of $d^{*}(A)$, and Section 3 of \cite{HegyvariRuzsa} asks for a proof which makes those parameters effective. Our main result is the following theorem, which provides the requested effective bounds.

\begin{theorem}\label{thmMainSpecial}
\begin{enumerate}
  \item[1.] Let $A\subseteq \mathbb Z$ have $d^{*}(A)>0$.  There are constants $k\in \mathbb N$, $\eta>0$, depending only on $d^{*}(A)$, such that $A+A-A-a$ contains a Bohr-$(k,\eta)$ set for some $a\in A$.

\smallskip

\item[2.]  For all $\varepsilon>0$, there are constants $k\in \mathbb N, \eta>0$, depending only on $d^{*}(A)$ and $\varepsilon$, such that there is a set $A'\subseteq A$ satisfying $d^{*}(A\setminus A')<\varepsilon$, and $A+A-A-a'$ contains a Bohr-$(k,\eta)$ set for all $a'\in A'$.
    \end{enumerate}
\end{theorem}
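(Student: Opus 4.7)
The strategy is to pass to an ergodic model of $A$ via Furstenberg's correspondence principle, analyze the three-fold count through the Kronecker factor, and extract a Bohr set with parameters depending only on $d^{*}(A)$ and $\varepsilon$ by a quantitative Fourier truncation of a triple convolution.

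Correspondence yields a probability measure-preserving system $(X, \mu, T)$, a set $E \subseteq X$ with $\mu(E) = d^{*}(A)$, and a point $x_{0}$ with $A = \{n : T^{n}x_{0} \in E\}$. The containment $n \in A+A-A-a$ is implied by positivity of
\[
C(n,a) := \lim_{N\to\infty} \frac{1}{N^{2}}\#\{(u,v)\in[1,N]^{2}: T^{u}x_{0},\, T^{v}x_{0},\, T^{u+v-a-n}x_{0}\in E\}.
\]
The linear form $u+v-a-n$ has Host--Kra/Gowers complexity one, so van der Corput shows that $C(n,a)$ is unchanged upon replacing $1_{E}$ by $g := \mathbb{E}(1_{E}\mid Z)$, where $(Z,m,R_{\alpha})$ is the Kronecker factor. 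Weyl equidistribution of $(u\alpha,v\alpha)$ in $Z\times Z$ then identifies
\[
C(n,a) = \phi(z_{0}+(n+a)\alpha),\qquad \phi(w) := \int_{Z}\int_{Z} g(s)\,g(t)\,g(s+t-w)\,dm(s)\,dm(t),
\]
with $z_{0}=\pi(x_{0})$ and $\phi$ a triple convolution of $g$ on $Z$ (up to reflection).

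The Fourier-truncation step delivers the Bohr set. Since $|\hat{\phi}(\xi)|=|\hat{g}(\xi)|^{3}$ and $\|g\|_{2}^{2}\le d^{*}(A)$, Chebyshev bounds the number of characters with $|\hat{g}(\xi)|>\delta$ by $d^{*}(A)/\delta^{2}$. Truncating $\phi$ to these frequencies yields a trigonometric polynomial $P$ with $k=k(d^{*}(A),\delta)$ terms satisfying $\|\phi-P\|_{\infty}\le \delta\cdot d^{*}(A)$. When $a\in A$ satisfies $\phi(z_{0}+a\alpha)\ge \eta_{0}$, continuity of $P$ forces $P\ge \eta_{0}/2$ on a Bohr-$(k,\eta)$ neighborhood of $z_{0}+a\alpha$ in $Z$, which pulls back under $n\mapsto (n+a)\alpha$ to a Bohr-$(k,\eta)$ set $U\subseteq A+A-A-a$. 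For Part~1, the identity $\int_{Z}\phi\cdot g\,dm=\|g\ast g\|_{2}^{2}\ge d^{*}(A)^{4}$ together with Markov forces a positive-density subset of $a\in A$ with $\phi(z_{0}+a\alpha)\ge \eta_{0}(d^{*}(A))$. For Part~2, I take $A' := \{a\in A: \phi(z_{0}+a\alpha)\ge \eta_{0}\}$ and pick $\eta_{0}=\eta_{0}(d^{*}(A),\varepsilon)$ small enough that $\int_{\{\phi<\eta_{0}\}}g\,dm<\varepsilon$; such a choice exists because $\phi$ is strictly positive at every density point of $\operatorname{supp}(g)$ (take $s=t=z$ in the defining integral), so $g$ vanishes $m$-a.e.\ on $\{\phi=0\}$, and then $d^{*}(A\setminus A') \le \int_{\phi<\eta_{0}}g\,dm<\varepsilon$ via the correspondence inequality.

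The main obstacle is uniformity of $k$, $\eta$, and $\eta_{0}$ in $d^{*}(A)$ and $\varepsilon$ alone, independent of the system $(X,\mu,T)$ or the function $g$. The Chebyshev/Fourier truncation delivers $k$ and $\eta$ from the universal $L^{2}$ bound $\|g\|_{2}^{2}\le d^{*}(A)$. The more delicate point is the Part~2 threshold $\eta_{0}$: making $\int_{\{\phi<\eta_{0}\}}g\,dm$ uniformly small requires a system-independent quantitative lower bound on $\phi$ over most of $\operatorname{supp}(g)$, which can be obtained by reapplying the Fourier truncation at scale $\varepsilon$, or alternatively via a compactness/ultralimit reduction to a universal model. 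A secondary technical issue is that van der Corput and Weyl equidistribution are usually phrased for $\mu$-a.e.\ trajectories rather than for a specific generic $x_{0}$; this is handled by keeping every step at the level of correspondence-principle density inequalities for $d^{*}$ rather than pointwise ergodic-theoretic statements.
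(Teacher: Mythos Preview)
Your Part~1 follows essentially the same route as the paper: correspondence principle, reduction to a triple convolution $\phi$ on the Kronecker factor, the lower bound $\int \phi\cdot g\,dm \ge d^{*}(A)^{4}$ (this is exactly the paper's inequality~(3.6)), and a Chebyshev truncation of $\hat{\phi}$ to force $\phi>0$ on a Bohr neighborhood. The paper organizes the correspondence somewhat differently---it works with a set $D\subseteq X$ and the single averages $\frac{1}{|\Phi_{n}|}\sum_{a\in A_{n}}1_{D}\circ T^{-a}$ rather than with a distinguished generic point $x_{0}$ and double averages over $(u,v)$---which sidesteps the a.e.-versus-specific-point issue you flag at the end.

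For Part~2 there is a genuine gap, and the paper's argument is both different and much shorter. You propose to choose $\eta_{0}=\eta_{0}(d^{*}(A),\varepsilon)$ so that $\int_{\{\phi<\eta_{0}\}}g\,dm<\varepsilon$ \emph{uniformly in the system}, and you correctly identify this as the main obstacle; your two suggested fixes (re-truncation at scale $\varepsilon$, or a compactness/ultralimit reduction) are not worked out, and the first is not obviously sufficient, since the rate at which $\int_{\{\phi<\eta_{0}\}}g\,dm\to 0$ as $\eta_{0}\to 0$ depends on the particular $g$, not merely on $\int g$. The paper bypasses all of this: if Part~2 fails for some $\delta,\varepsilon>0$, then for every $k,\eta$ there is a set $A$ with $d^{*}(A)>\delta$ whose exceptional set $E=E(A,k,\eta)\subseteq A$ has $d^{*}(E)>\varepsilon$. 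Since $E\subseteq A$ gives $E+E-E\subseteq A+A-A$, the set $E+E-E-e$ contains no Bohr-$(k,\eta)$ set for any $e\in E$; as $k,\eta$ were arbitrary, this contradicts Part~1 applied to $E$ with density parameter $\varepsilon$. This four-line reduction delivers Part~2 immediately from Part~1, with no further analysis of $\phi$ or $g$ required.
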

The estimate $d^{*}(A\setminus A')<\varepsilon$ cannot be improved to $d^{*}(A\setminus A')=0$, but we omit examples to this effect, as the constructions are tedious.

Our proof of Theorem \ref{thmMainSpecial} generalizes without modification to the setting of countable abelian groups, so we work in that context.  The next section introduces some terminology and notation for countable abelian groups, and states Theorem \ref{thmMain}, the natural generalization of Theorem \ref{thmMainSpecial} to that setting.  The proof of Theorem \ref{thmMain} is supplied by Proposition \ref{propComplete}, which reduces the study of $A+A-A$ to an analogous problem for compact abelian groups, solved in Section \ref{secCompactAbelian}. The proof of Proposition \ref{propComplete} is carried out in Section \ref{secErgodic} using ergodic theoretic methods similar to those of \cite{GrIsr,GrAdv}.

It would be interesting to find a shorter, more elementary proof of Theorem \ref{thmMainSpecial}.
\section{Countable abelian groups}\label{secCountableAbelian}
Let $\Gamma$ be a countable  abelian group.  If $A, B\subseteq \Gamma$, $\gamma\in \Gamma$,  write $A+B$ for $\{a+b:a\in A, b\in B\}$, $A-B$ for $\{a-b: a\in A, b\in B\}$, and $\gamma+A$ for $\{\gamma+a: a\in A\}$.

\subsection{Upper Banach density} A F{\o}lner sequence for $\Gamma$ is a sequence  $(\Phi_{n})_{n\in \mathbb N}$ of finite subsets of $\Gamma$ such that $\lim_{n\to \infty} \frac{|\Phi_{n}\triangle (\Phi_{n}+\gamma)|}{|\Phi_{n}|}=0$ for every $\gamma\in \Gamma$.  It is well known that every countable abelian group admits a F{\o}lner sequence.  If $\mathbf \Phi = (\Phi_{n})_{n\in \mathbb N}$ is a F{\o}lner sequence and $A\subseteq \Gamma$, let $\overline{d}_{\mathbf \Phi}(A):= \limsup_{n\to \infty} \frac{|A\cap \Phi_{n}|}{|\Phi_{n}|}$ be the \emph{upper density of $A$ with respect to $\mathbf \Phi$}.  Write $d_{\mathbf \Phi}(A)$ for $\overline{d}_{\mathbf \Phi}(A)$ if the limit exists.  For $A\subseteq \Gamma$, the \emph{upper Banach density of $A$} is  $d^{*}(A):=\sup\{d_{\mathbf \Phi}(A): \mathbf \Phi \text{ is a F{\o}lner sequence}\}$.

\subsection{Bohr neighborhoods}  Let $\mathbb T$ denote the group $\mathbb R/\mathbb Z$.  For $t\in \mathbb R$, recall that $\|t\|$ is the distance from $t$ to the nearest integer. For $x\in \mathbb T$, $\|x\|$ is defined to be $\|\tilde{x}\|$, where $\tilde{x}\in \mathbb R$ satisfies $\tilde{x}+\mathbb Z = x$.

If $S$ is a finite set of homomorphisms $\rho: \Gamma\to \mathbb T$, $|S|=k$, and $\eta>0$, then $\{\gamma \in \Gamma: \|\rho(\gamma)\|<\eta \text{ for all } \rho \in S\}$ is a called a \emph{Bohr-$(k,\eta)$ set}.  A \emph{Bohr-$(k,\eta)$ neighborhood} of $a\in \Gamma$ is a set of the form $a+U$, where $U$ is a Bohr-$(k,\eta)$ set.  These definitions of ``Bohr neighborhood" and ``Bohr set" agree with the definitions in Section \ref{secIntroduction}, as the maps $n\mapsto s_{i}n+\mathbb Z$ are homomorphisms from $\mathbb Z$ to $\mathbb T$.

Let $\mathcal S^{1}$ denote the group of complex numbers of modulus $1$, with the group operation of multiplication.  The groups $\mathbb T$ and $\mathcal S^{1}$ are isomorphic, via the isomorphism $e_{1}:\mathbb T\to \mathcal S_{1}$, $e_{1}(t):=\exp(2\pi i t)$.  This leads to the following observation.
\begin{observation}\label{obsBohrEquiv}
If $S$ is a finite set of homomorphisms $\rho: \Gamma\to \mathcal S^{1}$, $|S|=k$, and $\eta>0$, then the set $\{\gamma \in \Gamma: |\rho(\gamma)-1|<\eta \text{ for all } \rho\in S\}$ contains a Bohr-$(k,\eta/(2\pi))$ set.
\end{observation}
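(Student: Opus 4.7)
The plan is to pull each multiplicative homomorphism $\rho\in S$ back to an additive homomorphism into $\mathbb T$ using the isomorphism $e_1$, and then compare the two notions of ``small'' by an elementary trigonometric estimate.

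More concretely, for each $\rho\in S$ define $\sigma_\rho:=e_1^{-1}\circ\rho:\Gamma\to\mathbb T$; this is a homomorphism because $e_1$ is an isomorphism. Let $S':=\{\sigma_\rho:\rho\in S\}$, which has cardinality $k$, and let
\[
U:=\{\gamma\in\Gamma:\|\sigma(\gamma)\|<\eta/(2\pi)\text{ for all }\sigma\in S'\},
\]
which is a Bohr-$(k,\eta/(2\pi))$ set by definition. I claim $U$ is contained in the target set.

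The key inequality is $|e^{2\pi i t}-1|\le 2\pi\|t\|$ for every $t\in\mathbb T$, which follows from $|e^{2\pi i t}-1|=2|\sin(\pi t)|$ together with $|\sin(\pi s)|\le\pi|s|$ applied to the representative $s\in[-1/2,1/2]$ of $t$ (so $|s|=\|t\|$). Granting this, for any $\gamma\in U$ and any $\rho\in S$ we have
\[
|\rho(\gamma)-1|=\bigl|e^{2\pi i\sigma_\rho(\gamma)}-1\bigr|\le 2\pi\|\sigma_\rho(\gamma)\|<2\pi\cdot\frac{\eta}{2\pi}=\eta,
\]
so $\gamma$ lies in the set $\{\gamma\in\Gamma:|\rho(\gamma)-1|<\eta\text{ for all }\rho\in S\}$, as required.

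There is essentially no obstacle here: the only nontrivial input is the estimate $|e^{2\pi i t}-1|\le 2\pi\|t\|$, and the rest is bookkeeping to match the definition of a Bohr-$(k,\eta/(2\pi))$ set. The observation is really just recording that translating between the additive circle $\mathbb T$ with the norm $\|\cdot\|$ and the multiplicative circle $\mathcal S^1$ with the Euclidean distance costs at most a factor of $2\pi$ in the radius.
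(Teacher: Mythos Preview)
Your proof is correct and is exactly the argument the paper has in mind: the observation is stated without proof, immediately after introducing the isomorphism $e_{1}:\mathbb T\to\mathcal S^{1}$, and your use of $e_{1}^{-1}$ together with the elementary bound $|e^{2\pi i t}-1|\le 2\pi\|t\|$ is precisely what ``this leads to the following observation'' is pointing at.
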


\begin{theorem}\label{thmMain}
Let $\varepsilon, \delta>0$, and let $\Gamma$ be a countable abelian group.
\begin{enumerate}
  \item[1.] There are constants $k\in \mathbb N$, $\eta>0$, depending only on $\delta$, such that if $A\subseteq \Gamma$,  has $d^{*}(A)\geq \delta$, then $A+A-A-a$ contains a Bohr-$(k,\eta)$ set for some $a\in A$.

\item[2.]  There are constants $k'\in \mathbb N, \eta'>0$, depending only on $\delta$ and $\varepsilon$, such that if $A\subseteq \Gamma$ has $d^{*}(A)\geq \delta$, there is a set $A'\subseteq A$ satisfying $d^{*}(A\setminus A')<\varepsilon$, and $A+A-A-a'$ contains a Bohr-$(k',\eta')$ set for all $a'\in A'$.
    \end{enumerate}
The constants $k,k',\eta, \eta'$ do not depend on $\Gamma$.
\end{theorem}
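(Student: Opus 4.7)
The plan is to execute the two-stage strategy signaled in the introduction: first reduce Theorem~\ref{thmMain} to an analogous statement about subsets of compact abelian groups by an ergodic-theoretic argument (the content of Proposition~\ref{propComplete}, to be proved in Section~\ref{secErgodic}), and then dispatch the compact-group statement by Fourier analysis (Section~\ref{secCompactAbelian}).

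For the reduction, given $A\subseteq \Gamma$ with $d^{*}(A)\geq \delta$, I would apply Furstenberg's correspondence principle for countable abelian groups to produce a probability measure-preserving $\Gamma$-system $(X,\mathcal{B},\mu,(T^{\gamma}))$ and a measurable set $\tilde A\subseteq X$ with $\mu(\tilde A)=d^{*}(A)$. Let $\pi:X\to Z$ be the Kronecker factor; $Z$ is a compact abelian group on which $\Gamma$ acts by translation through a homomorphism $\beta:\Gamma\to Z$ of dense image, and crucially Bohr sets in $Z$ pull back through $\beta$ to Bohr sets in $\Gamma$ of the same dimension and radius. Set $f:=\mathbb{E}[1_{\tilde A}\mid Z]\in L^{\infty}(Z,[0,1])$ and let $F\subseteq Z$ be a super-level set of $f$, e.g.\ $F=\{f\geq \delta/2\}$, which satisfies $m_{Z}(F)\geq \delta/2$. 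Proposition~\ref{propComplete} should then assert (roughly): if $F+F-F$ contains a Bohr-$(k,\eta)$ neighborhood of some (resp.\ most) point of $F$ in $Z$, then $A+A-A$ contains a Bohr-$(k,\eta)$ neighborhood of some (resp.\ most) element of $A$ in $\Gamma$. Its proof uses (a) the standard $L^{2}$-control of the three-fold nonconventional average $\int 1_{\tilde A}\, T^{s}1_{\tilde A}\, T^{t}1_{\tilde A}\, d\mu$ by the Kronecker factor, and (b) a pointwise ergodic theorem along a F\o lner sequence to upgrade $L^{2}$ positivity on $Z$ into set-theoretic membership $a+u\in A+A-A$.

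For the compact-group statement, given $F\subseteq K$ with $m_{K}(F)\geq \delta'$, I would analyze $r:=1_{F}*1_{F}*1_{-F}$, whose Fourier transform $\hat r(\chi)=\hat 1_{F}(\chi)\,|\hat 1_{F}(\chi)|^{2}$ makes $F+F-F=\{r>0\}$. Let $\Lambda_{\eta}=\{\chi\in \hat K:|\hat 1_{F}(\chi)|\geq \eta\}$; Parseval gives $|\Lambda_{\eta}|\leq \delta'/\eta^{2}$, and the Bohr set $U$ of radius $\varepsilon$ cut out by $\Lambda_{\eta}$ satisfies $|r(a+u)-r(a)|\leq 2(\varepsilon(\delta')^{2}+\eta\delta')$ for $u\in U$, obtained by splitting the Fourier sum at $\Lambda_{\eta}$ and using $|\hat 1_{F}(\chi)|^{3}\leq \max(\delta',\eta)\,|\hat 1_{F}(\chi)|^{2}$ on each piece. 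On the other hand $\int_{F} r\, dm_{K}=\sum_{\chi}|\hat 1_{F}(\chi)|^{4}\geq (\delta')^{4}$, so reverse-Markov gives $r(a)\geq (\delta')^{3}/2$ on a subset of $F$ of Haar measure $\geq (\delta')^{2}/2$; choosing $\eta,\varepsilon$ small in terms of $\delta'$ makes the oscillation smaller than this threshold, hence $a+U\subseteq F+F-F$ on this subset, which gives part~1 in the compact setting. Part~2 requires a strengthening---likely an $L^{p}$ level-set analysis of $r$, or a density-increment iteration of part~1---to arrange that all but an $\varepsilon$-fraction of $F$ has $r$ sufficiently large.

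The principal obstacle is the ergodic bridge in Proposition~\ref{propComplete}: translating a pointwise, set-theoretic containment in $Z$ into a pointwise containment in $\Gamma$, with quantitative control of the exceptional subset of $A$. The Kronecker factor naturally provides only $L^{2}$-control of triple correlations, whereas Theorem~\ref{thmMain} demands that \emph{every} element of a Bohr neighborhood lie in $A+A-A$; closing this gap while keeping $k,\eta$ (and $k',\eta'$ for part~2) dependent only on $\delta$ (and $\varepsilon$) and independent of $\Gamma$ requires a careful pointwise ergodic and measurable-selection argument along a F\o lner sequence, which will be the core technical content of Section~\ref{secErgodic}.
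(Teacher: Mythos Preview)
Your Fourier analysis of $1_{F}*1_{F}*1_{-F}$ is essentially the paper's Lemma~\ref{lemConvolution}, so that piece is fine. Two points, however, diverge from the paper in ways that matter.

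First, you miss the derivation of Part~2 from Part~1, which is a one-paragraph trick requiring no further Fourier or ergodic work. If Part~2 failed, then for some $\varepsilon>0$ and for every choice of $(k,\eta)$ there would exist $A$ with $d^{*}(A)\geq\delta$ whose exceptional set $E=\{a\in A: A+A-A-a\text{ contains no Bohr-}(k,\eta)\text{ set}\}$ satisfies $d^{*}(E)>\varepsilon$. Since $E\subseteq A$ we have $E+E-E\subseteq A+A-A$, so $E+E-E-e$ also contains no Bohr-$(k,\eta)$ set for any $e\in E$. Taking $(k,\eta)$ to be the constants Part~1 supplies for density level $\varepsilon$ and applying Part~1 to $E$ yields a contradiction. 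Your proposed $L^{p}$ level-set argument or density increment is unnecessary.

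Second, the paper's ergodic bridge (Proposition~\ref{propComplete}) is organized differently from your sketch, and the difference is precisely what dissolves the ``principal obstacle'' you worry about. The paper does not pass through a super-level set $F\subseteq Z$ and then try to lift $F+F-F$ back to $A+A-A$; that lift is genuinely problematic, since $z\in F+F-F$ gives no direct handle on preimages in $\Gamma$. Instead the construction is asymmetric: one applies the correspondence principle to $B$ (here $B=A$) to obtain a set $D$ in an ergodic system with $\{\gamma:\mu(D\cap T^{\gamma}D)>0\}\subseteq B-B$, while $A$ enters only through averaging. The weak $L^{2}$ limit of $\frac{1}{|\Phi_{n}|}\sum_{a\in A\cap\Phi_{n}}1_{D}\circ T^{-a}$ is supported on $\bigcup_{a\in A}T^{a}D$ and, being a limit of Kronecker-measurable averages, equals $(f*g)\circ\pi$ for some $f:Z\to[0,1]$ supported on $\overline{\rho(A)}$ with $\int f\,dm=d^{*}(A)$ and $g=P_{\mathcal Z}1_{D}$. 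One then has, for every $\gamma$, the chain $f*g*g_{-}(\rho(\gamma))>0\Rightarrow \int (f*g)\circ\pi\cdot 1_{D}\circ T^{-\gamma}\,d\mu>0\Rightarrow \mu\bigl((\bigcup_{a\in A}T^{a}D)\cap T^{\gamma}D\bigr)>0\Rightarrow \gamma\in A+B-B$. No pointwise ergodic theorem, measurable selection, or three-fold nonconventional average is involved; the containment is built into the correspondence setup. The center $a\in A$ is then obtained because Lemma~\ref{lemConvolution} places the Bohr neighborhood around some $\tilde a$ with $f(\tilde a)>0$, hence $\tilde a\in\overline{\rho(A)}$, and one approximates $\tilde a$ by $\rho(a)$ for some $a\in A$ via Lemma~\ref{lemBohrIsOpen}.
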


Part 2 follows immediately from Part 1, while Part 1 will be derived from Proposition \ref{propComplete} and proved in Section \ref{secReduction}.

\begin{proof}[Proof of Part 2.]
Assume Part 1 holds, and assume, to get a contradiction, that Part 2 fails. Given a set $A\subseteq \Gamma$, $k\in \mathbb N$, $\eta>0$, let
 \[E(A,k,\eta):=\{a\in A: A+A-A-a \text{ does not contain a Bohr-$(k,\eta)$ set}\}.\]
    Then there are $\delta>0$, $\varepsilon>0$ such that for all $k\in \mathbb N$, $\eta>0$, there exists $A$ having $d^{*}(A)>\delta$, while $d^{*}(E(A,k,\eta))>\varepsilon$.  Setting $E:=E(A,k,\eta)$, we have an $\varepsilon>0$ such that for all $k\in \mathbb N, \eta>0$, there is a set $E$ satisfying $d^{*}(E)>\varepsilon$ and for all $e\in E$, $E+E-E-e$ does not contain a Bohr-$(k,\eta)$ set.  This contradicts Part 1.
\end{proof}

\section{Compact abelian groups}\label{secCompactAbelian}

\subsection{Bohr neighborhoods in topological abelian groups}

\begin{definition} If $Z$ is a topological abelian group, $S$ is a set of \emph{continuous} homomorphisms $\rho: Z\to \mathbb T$, $|S|=k$, and $\eta>0$, then $\{z: \|\rho(z)\|<\eta \text{ for all } \rho \in S\}$ is a \emph{Bohr-$(k,\eta)$ set}.
 A Bohr-$(k,\eta)$ neighborhood of $z\in Z$ is a set of the form $z+U$, where $U$ is a Bohr-$(k,\eta)$ set.
\end{definition}
  If $Z$ is a topological abelian group, $\widehat{Z}$ denotes the character group of $Z$, meaning the group of continuous homomorphisms $\chi: Z\to \mathcal S^{1}$ with the group operation of pointwise multiplication.

\begin{observation}\label{obsBohrEquiv2} Following Observation \ref{obsBohrEquiv}, we see that if $S\subseteq \widehat{Z}$, $|S|=k$, and $\eta>0$, then the set $\{z: |\chi(z)-1|< \eta \text{ for all } \chi \in S\}$ contains a Bohr-$(k,\eta/(2\pi))$ set.
\end{observation}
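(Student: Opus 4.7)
The plan is to mimic the proof of Observation \ref{obsBohrEquiv}, adapted from the discrete group $\Gamma$ to the topological group $Z$, with the only point of care being the preservation of continuity. The single analytic ingredient is the elementary inequality
\[
|e^{2\pi i t} - 1| = 2|\sin(\pi t)| \leq 2\pi \|t + \mathbb Z\|
\]
valid for every $t\in\mathbb R$, which converts bounds on $\|\rho(z)\|$ into bounds on $|\chi(z)-1|$ via the isomorphism $e_1$.

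First I would attach to each $\chi\in S$ the homomorphism $\rho_\chi := e_1^{-1}\circ \chi : Z \to \mathbb T$, noting that continuity survives because $e_1:\mathbb T \to \mathcal S^1$ is a homeomorphism. Setting $S' := \{\rho_\chi : \chi \in S\}$, a set of $k$ continuous homomorphisms $Z\to \mathbb T$, the natural candidate is the Bohr-$(k,\eta/(2\pi))$ set
\[
U := \{z\in Z : \|\rho(z)\| < \eta/(2\pi) \text{ for all } \rho \in S'\}.
\]

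Finally I would check $U \subseteq \{z\in Z : |\chi(z)-1|<\eta \text{ for all } \chi \in S\}$: for $z\in U$ and $\chi\in S$, the inequality above applied to any real representative of $\rho_\chi(z)$ yields $|\chi(z)-1| = |e_1(\rho_\chi(z))-1| \leq 2\pi\|\rho_\chi(z)\| < \eta$. The main (and only) thing to be careful about is that each $\rho_\chi$ is continuous as a map into $\mathbb T$, so that $U$ genuinely qualifies as a Bohr set in the sense of the new definition for topological abelian groups; this is immediate from $e_1$ being a homeomorphism. Otherwise the argument is a direct transplant of Observation \ref{obsBohrEquiv}.
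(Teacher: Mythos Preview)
Your proposal is correct and matches the paper's intended argument: the observation is not given a separate proof in the paper but is simply declared to follow Observation~\ref{obsBohrEquiv} via the isomorphism $e_1:\mathbb T\to\mathcal S^1$, and your write-up spells this out precisely, including the only new point (continuity of $\rho_\chi=e_1^{-1}\circ\chi$) needed to fit the topological-group definition of a Bohr set.
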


\begin{lemma}\label{lemCompactToCountable}
  Let $\Gamma$ be a countable abelian group and $Z$ a topological abelian group.  If $\rho:\Gamma \to Z$ is a homomorphism and $U\subseteq Z$ contains a Bohr-$(k,\eta)$ set, then $\rho^{-1}(U)$ contains a Bohr-$(k,\eta)$ set in $\Gamma$.
\end{lemma}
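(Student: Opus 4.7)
The plan is essentially to pull back the defining characters of the Bohr set in $Z$ along $\rho$ and check that they are homomorphisms from $\Gamma$ to $\mathbb T$.

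First I would unpack the hypothesis: since $U$ contains a Bohr-$(k,\eta)$ set in $Z$, there exist continuous homomorphisms $\chi_{1},\dots,\chi_{k}: Z\to \mathbb T$ such that
\[
V := \{z\in Z : \|\chi_{i}(z)\| < \eta \text{ for all } i=1,\dots,k\} \subseteq U.
\]
Next I would form the compositions $\rho_{i}:=\chi_{i}\circ \rho: \Gamma \to \mathbb T$ for $i=1,\dots,k$. Each $\rho_{i}$ is a homomorphism (composition of homomorphisms), which is all that is required in the definition of a Bohr set in a countable abelian group from Section~\ref{secCountableAbelian} (no continuity is demanded on $\Gamma$, which carries no topology). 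Then I would set
\[
W := \{\gamma\in \Gamma : \|\rho_{i}(\gamma)\| < \eta \text{ for all } i=1,\dots,k\},
\]
so $W$ is by construction a Bohr-$(k,\eta)$ set in $\Gamma$.

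Finally I would verify the inclusion $W\subseteq \rho^{-1}(U)$. For $\gamma\in W$, the values $\|\chi_{i}(\rho(\gamma))\|=\|\rho_{i}(\gamma)\|$ are all less than $\eta$, so $\rho(\gamma)\in V\subseteq U$, giving $\gamma\in \rho^{-1}(U)$.

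There is no real obstacle here; the lemma is essentially a definition chase. The only subtle point worth noting is the distinction between the two notions of Bohr set in play (the one for topological groups, requiring continuous characters, versus the one for arbitrary countable abelian groups, requiring only homomorphisms): continuity of the $\chi_{i}$ guarantees that the $\rho_{i}$ are genuine homomorphisms into $\mathbb T$, and no further topological input on $\Gamma$ is needed for $W$ to qualify as a Bohr set under the definition from Section~\ref{secCountableAbelian}.
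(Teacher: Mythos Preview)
Your proof is correct and follows exactly the same approach as the paper: pull back the $k$ characters defining the Bohr set in $Z$ along $\rho$, and observe that the resulting set $\{\gamma:\|\chi_i(\rho(\gamma))\|<\eta\ \forall i\}$ is a Bohr-$(k,\eta)$ set in $\Gamma$ contained in $\rho^{-1}(U)$. The only minor quibble is your closing remark: continuity of the $\chi_i$ is not what makes the $\rho_i$ homomorphisms (composition of homomorphisms already does that); the relevant point is simply that the definition of a Bohr set in $\Gamma$ imposes no continuity requirement.
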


\begin{proof}
  Let $S$ be a set of $k$ continuous homomorphisms $\psi:Z\to \mathbb T$ such that $U$ contains $\{z\in Z: \|\psi(z)\|< \eta \text{ for all } \psi\in S\}$.  Then $\rho^{-1}(U)$ contains $\{\gamma\in \Gamma: \|\psi \circ \rho(\gamma)\|<\eta \text{ for all } \psi \in S\}$, which is a Bohr-$(k,\eta)$ set.
\end{proof}

\begin{lemma}\label{lemBohrPoly}
  Let $Z$ be a topological abelian group and $S$ a finite subset of $\widehat{Z}$.  If $p:Z\to \mathbb C$, $p:=\sum_{\chi \in S} c_{\chi}\chi$, where $|c_{\chi}|\leq 1$ for all $\chi$, and $a\in Z$ satisfies $\Re p(a)\geq c$, then $\{x\in Z: \Re p(x)>0\}$ contains a Bohr-$(k,\eta)$ neighborhood of $a$, where $k$ and $\eta$ depend only on $|S|$ and $c$.  In fact we can use $k=|S|$ and $\eta=\frac{c}{2\pi|S|}$.
\end{lemma}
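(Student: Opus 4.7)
The plan is to show that if $x$ is close enough to $a$ in the topology generated by the characters in $S$, then $\Re p(x)$ stays positive because $p$ does not move too much. Specifically, I will use the identity
\[
p(x) - p(a) = \sum_{\chi \in S} c_\chi \bigl(\chi(x) - \chi(a)\bigr) = \sum_{\chi \in S} c_\chi\, \chi(a)\,\bigl(\chi(x-a) - 1\bigr),
\]
which uses that each $\chi$ is a homomorphism. Since $|c_\chi| \leq 1$ and $|\chi(a)| = 1$, the triangle inequality gives $|p(x) - p(a)| \leq \sum_{\chi \in S} |\chi(x-a) - 1|$.

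Next, I would restrict $x$ to lie in $a + V$, where $V = \{z \in Z : |\chi(z) - 1| < c/|S| \text{ for all } \chi \in S\}$. For such $x$ the bound above yields $|p(x) - p(a)| < c$, hence $\Re p(x) > \Re p(a) - c \geq 0$. Thus $a + V \subseteq \{x : \Re p(x) > 0\}$.

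Finally, Observation \ref{obsBohrEquiv2} tells us that $V$ contains a Bohr-$(|S|, (c/|S|)/(2\pi))$ set, so $a + V$ contains a Bohr-$(|S|, c/(2\pi|S|))$ neighborhood of $a$. This matches the claimed values $k = |S|$ and $\eta = c/(2\pi |S|)$.

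There is no real obstacle here; the only ingredients are the multiplicative form of $\chi(x) = \chi(a)\chi(x-a)$, the triangle inequality, and the translation from the $|\chi(z)-1|$ formulation to the $\|\cdot\|$ formulation provided by Observation \ref{obsBohrEquiv2}. The result is essentially a continuity statement for trigonometric-like polynomials on $Z$.
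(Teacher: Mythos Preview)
Your proof is correct and matches the paper's argument essentially line for line: the paper also sets $U=\{z:|\chi(z)-1|<c/|S|\text{ for all }\chi\in S\}$, bounds $|p(a+x)-p(a)|$ via $\sum_{\chi\in S}|\chi(x)-1|<c$, and invokes Observation~\ref{obsBohrEquiv2} to identify $U$ as containing a Bohr-$(|S|,c/(2\pi|S|))$ set. The only cosmetic difference is that the paper parameterizes points as $a+x$ with $x\in U$ rather than $x\in a+V$.
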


\begin{proof}
  Let $U:=\{x: |\chi(x)-1|< \frac{c}{|S|} \text{ for all } \chi \in S\}$, so that $U$ is a Bohr-$(|S|, c/(2\pi|S|))$ set, by Observation \ref{obsBohrEquiv2}.  Then for $x\in U$,
\begin{align*}
  |p(a+x)-p(a)| &\leq \sum_{\chi \in S} |\chi(a+x)-\chi(a)| \\
  &= \sum_{\chi\in S} |\chi(a)(\chi(x)-1)| \\
  &= \sum_{\chi \in S} |\chi(x)-1| \\
  &<c,
\end{align*}
  so $\Re p(a+x)>0$ for all $x\in U$, and $a+U \subseteq \{x: \Re p(x)>0\}$.
\end{proof}

The following lemma may be proved similarly.

\begin{lemma}\label{lemBohrIsOpen}
  Suppose $U$ is a Bohr-$(k,\eta)$ neighborhood of $a\in Z$.  Then there is a neighborhood $V$ of $a$ (in the topology of $Z$) such that $U$ is a Bohr-$(k,\eta/2)$ neighborhood of $z$ for all $z\in V$.
\end{lemma}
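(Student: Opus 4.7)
The plan is to follow the author's hint ``may be proved similarly'' and mimic the triangle-inequality argument from the proof of Lemma \ref{lemBohrPoly}. Unpacking the hypothesis, write $U = a + U_0$ where $U_0 = \{y \in Z : \|\psi_i(y)\| < \eta \text{ for } i=1,\ldots,k\}$ for some continuous homomorphisms $\psi_1,\ldots,\psi_k : Z \to \mathbb{T}$.

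Take as the candidate neighborhood
\[V := \{z \in Z : \|\psi_i(z-a)\| < \eta/2 \text{ for all } i\},\]
which is open by continuity of the $\psi_i$ together with the distance function on $\mathbb{T}$, and contains $a$ since $\psi_i(0) = 0$. Set $U' := \{w \in Z : \|\psi_i(w)\| < \eta/2 \text{ for all } i\}$, which is a Bohr-$(k,\eta/2)$ set. For $z \in V$ and $w \in U'$, the triangle inequality for $\|\cdot\|$ on $\mathbb{T}$ gives $\|\psi_i(z+w-a)\| \leq \|\psi_i(z-a)\| + \|\psi_i(w)\| < \eta$, so $z + w \in a + U_0 = U$; that is, $z + U' \subseteq U$.

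There is no real obstacle: the proof is a direct continuity-plus-triangle-inequality argument, essentially the same calculation as the displayed estimate in the proof of Lemma \ref{lemBohrPoly}, with $\eta/2$ playing the role of $c/|S|$. The only point worth flagging is that the conclusion should be read as ``$U$ contains a Bohr-$(k,\eta/2)$ neighborhood of $z$,'' since a translate of a Bohr-$(k,\eta)$ set is not in general literally equal to a translate of any Bohr-$(k,\eta/2)$ set; this weaker reading is what is actually used downstream and is what the stated halving of the radius buys.
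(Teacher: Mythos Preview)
Your argument is correct and is precisely the ``similar'' proof the paper has in mind; the paper itself gives no proof beyond that hint, and your triangle-inequality computation with $V=a+U'$ is the natural one. Your remark that the conclusion should be read as ``$U$ \emph{contains} a Bohr-$(k,\eta/2)$ neighborhood of $z$'' is well taken and matches exactly how the lemma is invoked in the proof of Theorem~\ref{thmMain}, where the author writes ``$U-z$ contains a Bohr-$(k,\eta/2)$ set.''
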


\subsection{Fourier identities}\label{secFourier}  We summarize some of the basic facts and definitions from harmonic analysis on compact abelian groups, available in standard references such as  \cite{Rudin}.

Let $Z$ be a compact abelian group with Haar measure $m$, normalized so that $m(Z)=1$.  For $\chi\in \widehat{Z}$ and $f\in L^{2}(m)$, $\hat{f}(\chi):= \int f\cdot \overline{\chi} \,dm$, and the function $\hat{f}:\widehat Z\to \mathbb C$ is the \emph{Fourier transform} of $f$.
For $f, g\in L^{1}(m)$, define the convolution $f*g$ by
\[
f*g(z):=\int f(z-t)g(t) \,dm(t).
 \]
Let $f, g\in L^{2}(m)$.  Then

\begin{align}
\label{eqnPlancherel} \int f\cdot \bar{g} \,dm &= \sum_{\chi \in \widehat Z} \hat{f}(\chi) \cdot \overline{\hat g(\chi)}\\
\label{eqnIsometry} \int |f|^{2} \,dm &= \sum_{\chi \in \widehat{Z}} |\hat{f}(\chi)|^{2}\\
\label{eqnConvolution} \widehat{f*g}&=\hat{f}\cdot \hat{g}.
\end{align}
Let $g_{-}$ be the function defined by $g_{-}(z)=g(-z)$.  Then
\begin{equation}\label{eqnMinus}
  \widehat{g_{-}}=\overline{\hat{g}}.
\end{equation}
Convolution is associative, so the three-fold convolution $h:=f*g*g_{-}$ is well-defined.  Furthermore, when $f, g\in L^{2}(m)$, $h$ is continuous and its Fourier series $\sum_{\chi \in \widehat{Z}} \hat{h}(\chi)\cdot \chi$ converges uniformly to $h$.


\begin{lemma}\label{lemConvolution}
Let $\delta>0$.  There exist $k\in \mathbb N$, $\eta>0$, depending only on $\delta$, such that if $Z$ is a compact abelian group with Haar measure $m$, and $f, g: Z\to [0,1]$ are measurable functions having $\int f \,dm, \int g \,dm \geq \delta$, then $\{z\in Z: f*g*g_{-}(z)>0\}$ contains a Bohr-$(k,\eta)$ neighborhood of  some $a\in Z$ having $f(a)>0$.
\end{lemma}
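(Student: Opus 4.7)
The plan is to Fourier-expand $h := f*g*g_{-}$, truncate to the ``large spectrum'' of $g$, and then run the calculation behind Lemma \ref{lemBohrPoly}. By \eqref{eqnConvolution}, \eqref{eqnMinus}, and \eqref{eqnPlancherel} we obtain
\[
  \int f \cdot h\, dm \;=\; \sum_{\chi} |\hat f(\chi)|^2 |\hat g(\chi)|^2 \;\geq\; |\hat f(0)\hat g(0)|^2 \;\geq\; \delta^4,
\]
and $h$ is real-valued and nonnegative as a convolution of nonnegative functions.

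Fix $\varepsilon := \delta^4/4$ and let $S := \{\chi \in \widehat Z : |\hat g(\chi)| \geq \varepsilon\}$.  Using $\|g\|_2^2 \leq \|g\|_\infty\|g\|_1 \leq 1$ together with \eqref{eqnIsometry} one gets $|S| \leq 1/\varepsilon^2$, and since $|\hat g(\bar\chi)| = |\hat g(\chi)|$ the set $S$ is symmetric.  Let $p := \sum_{\chi \in S} \hat f(\chi) |\hat g(\chi)|^2 \chi$: the function $p$ is real-valued, with coefficients of modulus at most $1$.  A Cauchy-Schwarz estimate on the tail and Plancherel on the main part give
\[
  \|h-p\|_\infty \;\leq\; \varepsilon\sum_{\chi}|\hat f(\chi)||\hat g(\chi)| \;\leq\; \varepsilon \|f\|_2\|g\|_2 \;\leq\; \varepsilon,
\]
\[
  \int f\cdot p\,dm \;=\; \sum_{\chi\in S} |\hat f(\chi)|^2 |\hat g(\chi)|^2 \;\geq\; \int f\cdot h\,dm - \varepsilon^2 \;\geq\; 3\varepsilon.
\]
If $p(z) < 2\varepsilon$ held for every $z$ with $f(z)>0$, one would get $\int f\cdot p \leq 2\varepsilon$, contradicting the above; hence there exists $a$ with $f(a)>0$ and $p(a) \geq 2\varepsilon$.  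Running the computation inside Lemma \ref{lemBohrPoly} with the constant $\varepsilon$ in place of $c$, on the Bohr-$(|S|, \varepsilon/(2\pi|S|))$ set $U := \{x : |\chi(x)-1| < \varepsilon/|S| \text{ for all } \chi \in S\}$ one has $|p(a+x) - p(a)| < \varepsilon$ for every $x \in U$, so $p(a+x) > \varepsilon \geq \|h-p\|_\infty$ and therefore $h(a+x) > 0$ on $a+U$.

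The main obstacle is a single calibration: $\varepsilon$ must be large enough for the spectral cutoff to yield a finite $|S| \leq 1/\varepsilon^2$, yet small enough that both $\varepsilon^2$ and $2\varepsilon$ remain below the Parseval lower bound $\int f\cdot h \geq \delta^4$, so that a point $a$ with $p(a) \geq 2\varepsilon$ and $f(a) > 0$ can be located.  These constraints are compatible, and the choice $\varepsilon = \delta^4/4$ produces $k := |S| \leq 16/\delta^8$ and $\eta := \varepsilon/(2\pi|S|)$, both depending only on $\delta$.
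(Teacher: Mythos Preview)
Your proof is correct and follows essentially the same strategy as the paper: show $\int f\cdot h\,dm\geq\delta^4$, truncate the Fourier series of $h$ to a finite large spectrum, bound the tail uniformly, locate a point $a$ with $f(a)>0$ where the truncated polynomial is large, and apply Lemma~\ref{lemBohrPoly}. The one substantive variation is that you truncate at the large spectrum of $g$ (threshold $\varepsilon=\delta^4/4$) and bound the tail via Cauchy--Schwarz on $\sum|\hat f||\hat g|$, whereas the paper truncates at the large spectrum of $f$ (threshold $\tfrac14\delta^3$) and bounds the tail using $\sum|\hat g|^2\leq\|g\|_1$; this yields the paper's sharper $|S_1|\leq 16\delta^{-5}$ against your $|S|\leq 16\delta^{-8}$, but otherwise the arguments are parallel.
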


%
%

\begin{proof}
  Without loss of generality, we assume $\int f \,dm= \int g \,dm$, so let $f, g: Z\to [0,1]$ have $\int f \,dm = \int g \,dm = \delta$.

Let
  $h:= f*g*g_{-}$.  Then $h: Z\to [0,1]$, $h$ is continuous, and
    $\hat{h}=\hat{f}\cdot |\hat{g}|^{2}$, by Equations (\ref{eqnConvolution}) and (\ref{eqnMinus}).  Consequently $\hat{h}\in l^{1}(\widehat{Z})$.
Let $S_{1}:=\{\chi\in \widehat{Z}: |\hat{f}(\chi)|\geq \frac{1}{4}\delta^{3}\}$, $S_{2}:=\widehat{Z}\setminus S_{1}$.  We claim that
\begin{equation}\label{eqnS1Bound}
  |S_{1}| \leq 16\delta^{-5}
  \end{equation}
  and
\begin{equation}\label{eqnhBound}
  h(a) \geq \delta^{4} \text{ for some } a \text{ having } f(a)>0.
\end{equation}
We postpone the proofs of inequalities (\ref{eqnS1Bound}) and (\ref{eqnhBound}) and now prove the conclusion of the lemma.  Write the Fourier series of $h$ as $h(x) = p(x)+r(x)$, where
\begin{align*}
  p(x):= \sum_{\chi \in S_{1}} \hat{h}(\chi)\chi(x),\ \ r(x):= \sum_{\chi \in S_{2}} \hat{h}(\chi)\chi(x).
\end{align*}
Both series converge uniformly, since $\hat{h}\in l^{1}(\widehat{Z})$.
Estimating $r(x)$, we get
\begin{align*}
 \Bigl| \sum_{\chi \in S_{2}} \hat{h}(\chi)\chi(x)\Bigr| & \leq \sum_{\chi \in S_{2}} |\hat{f}(\chi)||\hat{g}(\chi)|^{2}\\
  &\leq \tfrac{1}{4}\delta^{3}\sum_{\chi\in S_{2}} |\hat{g}(\chi)|^{2}  && \text{by definition of $S_{2}$}\\
  &\leq \tfrac{1}{4}\delta^{3}\sum_{\chi\in \widehat{Z}} |\hat{g}(\chi)|^{2}\\
  &= \tfrac{1}{4} \delta^{3} \int |g|^{2} \,dm && \text{by (\ref{eqnIsometry})}\\
  &\leq \tfrac{1}{4} \delta^{3} \int |g| \,dm && \text{since } 0\leq g \leq 1\\
  &\leq \tfrac{1}{4}\delta^{3}\cdot \delta,
\end{align*} so
\begin{equation}\label{eqnRemainderEstimate}
  |r(x)|\leq \tfrac{1}{4}\delta^{4} \text{ for all } x.
\end{equation} It follows that $h(x)$ is positive whenever  $\Re p(x)> \frac{1}{4}\delta^{4}$, meaning the real part of $q(x):= p(x)-\frac{1}{4}\delta^{4}$ is positive.  Choose an $a_{0}$ so that $h(a_{0})\geq \delta^{4}$ and $f(a_{0})>0$. Inequality (\ref{eqnRemainderEstimate}) implies $\Re p(a_{0})\geq \frac{3}{4}\delta^{4}$, so $\Re q(a_{0})\geq \frac{1}{2}\delta^{4}$.  By Lemma \ref{lemBohrPoly}, the set $\{x:\Re q(x)>0\}$ contains a Bohr-$(|S_{1}|, c/(2\pi|S_{1}|))$ neighborhood around $a_{0}$, where $c = \Re q(a_{0}) \geq \frac{1}{2}\delta^{4}$.  This Bohr neighborhood is contained in $\{x: h(x)>0\}$, so we have proved the lemma.

It remains to prove inequalities (\ref{eqnS1Bound}) and (\ref{eqnhBound}). To prove Inequality (\ref{eqnS1Bound}), consider
\begin{align*}
|S_{1}|\cdot \tfrac{1}{16}\delta^{6}&\leq \sum_{\chi \in S_{1}} |\hat{f}(\chi)|^{2}\\
& \leq \sum_{\chi \in \widehat {Z}} |\hat{f}(\chi)|^{2} \\
&= \int |f|^{2} \,dm && \text{by (\ref{eqnIsometry})} \\
 & \leq \int |f| \,dm && \text{since $0\leq f \leq 1$}\\
 & = \delta,
\end{align*} so $|S_{1}|\leq 16\delta^{-5}$.

To prove Inequality (\ref{eqnhBound}), consider
\begin{align*}
\int h\cdot f \,dm
  &= \int h\cdot \bar{f} \,dm\\
  &= \sum_{\chi \in \widehat{Z}} \hat{h}(\chi)\overline{\hat{f}(\chi)} && \text{by (\ref{eqnPlancherel})}\\
 &= \sum_{\chi \in \widehat{Z}} |\hat{f}(\chi)|^{2}|\hat{g}(\chi)|^{2}\\
 &= \sum_{\chi \in \widehat{Z}} |\widehat{f*g}(\chi)|^{2}&&  \text{by (\ref{eqnConvolution})}\\
 &= \int |f*g|^{2} \,dm && \text{by (\ref{eqnIsometry})}\\
 &\geq \bigl|\int f*g \,dm\bigr|^{2} &&  \text{by convexity of $t\mapsto t^{2}$}\\
 & = \delta^{4},
\end{align*}
 so that $\int h\cdot f \,dm\geq \delta^{4}$.  Let $A:=\{a\in Z: f(a)>0\}$.  Observe that $\sup_{a\in A} h(a)  \geq \int h\cdot f \,dm$, and equality holds only if $h$ is constant, so we conclude that $h(a)\geq \delta^{4}$ for some $a\in A$.  \end{proof}

\section{From countable to compact}\label{secReduction}

\begin{proposition}\label{propComplete}
  Let $\Gamma$ be a countable abelian group and let $A, B\subseteq \Gamma$ have $d^{*}(A)>0$, $d^{*}(B)>0$.  Then there are:

   \begin{enumerate}
   \item[$\bullet$] a compact abelian group $Z$ with normalized Haar measure $m$,
   \item[$\bullet$] a homomorphism $\rho:\Gamma\to Z$ such that $\rho(\Gamma)$ is dense in $Z$,
   \item[$\bullet$] Borel functions $f, g: Z\to [0,1]$ satisfying $\int f \,dm=d^{*}(A)$, $\int g \,dm=d^{*}(B)$,
       \end{enumerate}
  such that
  \begin{enumerate}
    \item[$\bullet$]  $f$ is supported on $\overline{\rho(A)}$,
    \item[$\bullet$] $\{\gamma\in \Gamma: f*g*g_{-}(\rho(\gamma))>0\}\subseteq A+B-B$,
  \end{enumerate}
   where $\overline{\rho(A)}$ denotes the topological closure of $\rho(A)$ in $Z$.
\end{proposition}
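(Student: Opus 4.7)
The plan is a joint Furstenberg correspondence argument, transporting the problem from $\Gamma$ to the Kronecker factor of an ergodic system.

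First, I would pick a F{\o}lner sequence $(\Phi_n)$ along which both $A$ and $B$ attain their upper Banach densities; on $X=\{0,1\}^{\Gamma}\times\{0,1\}^{\Gamma}$ with the shift $\Gamma$-action, take $x_0=(1_A,1_B)$ and let $\mu$ be a weak-$*$ subsequential limit of the empirical measures $|\Phi_n|^{-1}\sum_{\gamma\in\Phi_n}\delta_{T_\gamma x_0}$. With $\tilde A,\tilde B$ the obvious cylinder sets, one has $\mu(\tilde A)=d^{*}(A)$, $\mu(\tilde B)=d^{*}(B)$, and $T_\gamma x_0\in\tilde A$ iff $\gamma\in A$ (and similarly for $B$). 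Replacing $\mu$ by an ergodic component containing $x_0$, I assume $(X,\mu,T)$ is ergodic.

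Second, I would pass to the Kronecker factor $(Z,m,R)$ of $(X,\mu,T)$: $Z$ is a compact abelian group, $m$ its Haar measure, and the factor map $\pi:X\to Z$ satisfies $\pi\circ T_\gamma=R_{\rho(\gamma)}\circ\pi$ for a homomorphism $\rho:\Gamma\to Z$ whose image is dense (by ergodicity). Setting $f=E(1_{\tilde A}\mid\mathcal K)$ and $g=E(1_{\tilde B}\mid\mathcal K)$, regarded as Borel functions on $Z$, gives $\int f\,dm=d^{*}(A)$ and $\int g\,dm=d^{*}(B)$. After normalizing so $\pi(x_0)=0$, the inclusion $\tilde A\subseteq \pi^{-1}(\overline{\rho(A)})$---which can be arranged in the construction---forces $\operatorname{supp}(f)\subseteq\overline{\rho(A)}$.

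The main step is to prove $\{\gamma:f*g*g_-(\rho(\gamma))>0\}\subseteq A+B-B$. The approach is to relate the Kronecker-level quantity $f*g*g_-(\rho(\gamma))$ to a combinatorial two-parameter average of the shape
\[
\frac{1}{|\Phi_n|^{2}}\sum_{b_1,b_2\in \Phi_n}1_B(b_1)\,1_B(b_2)\,1_A(\gamma+b_1-b_2),
\]
exploiting that the Kronecker factor is characteristic for two-parameter ergodic averages coupled through the single difference $b_1-b_2$. Once one has $f*g*g_-(\rho(\gamma))\leq\limsup_n(\text{combinatorial average})$, positivity of the left side forces the count on the right to be positive for some $n$, producing a concrete triple $(a,b_1,b_2)\in A\times B\times B$ with $a+b_1-b_2=\gamma$. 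The crux---and the expected main obstacle---is establishing this inequality: because $f$ and $g$ are conditional expectations rather than indicators, Kronecker-level positivity does not automatically yield a combinatorial witness. Bridging this gap requires exploiting both the genericity of $x_0$ along $(\Phi_n)$ and a mean-ergodic / correlation-sequence argument in the spirit of \cite{GrIsr,GrAdv} to transfer positivity on $Z$ back to a positive combinatorial count in $\Gamma$.
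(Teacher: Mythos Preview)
Your joint-correspondence approach is not the route the paper takes, and it runs into real obstacles that the paper's asymmetric construction is designed to avoid.

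\textbf{The joint construction breaks at two places.} First, there is in general no single F{\o}lner sequence along which both $A$ and $B$ attain their upper Banach densities: take $\Gamma=\mathbb Z$, $A=\mathbb N$, $B=-\mathbb N$, where $d^{*}(A)=d^{*}(B)=1$ but no sequence of intervals witnesses both. Second, even if you build a $T$-invariant measure $\mu$ with $\mu(\tilde A)=d^{*}(A)$ and $\mu(\tilde B)=d^{*}(B)$, passing to an ergodic component can only guarantee one of the two inequalities $\mu(\tilde A)\ge d^{*}(A)$, $\mu(\tilde B)\ge d^{*}(B)$, not both. After that replacement, $x_0$ is no longer generic for the ergodic measure, so your proposed bridge from $f*g*g_{-}(\rho(\gamma))$ back to the combinatorial average $\frac{1}{|\Phi_n|^{2}}\sum 1_B(b_1)1_B(b_2)1_A(\gamma+b_1-b_2)$ has nothing to stand on: the orbit averages of $x_0$ need not compute integrals against the ergodic $\mu$.

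\textbf{How the paper proceeds instead.} The paper treats $A$ and $B$ asymmetrically. Only $B$ is fed into the correspondence principle, producing an ergodic system $(X,\mu,T)$ with a set $D$ such that $B-B\supseteq\{\gamma:\mu(D\cap T^{\gamma}D)>0\}$; then $g$ is simply $P_{\mathcal Z}1_D$ viewed on the Kronecker factor $Z$. The function $f$ is \emph{not} a conditional expectation: it is the Radon--Nikodym derivative of a weak-$*$ limit on $Z$ of the empirical measures $\nu_n=|\Phi_n|^{-1}\sum_{a\in A\cap\Phi_n}\delta_{\rho(a)}$, taken along a F{\o}lner sequence realizing $d^{*}(A)$. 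This automatically gives $\int f\,dm=d^{*}(A)$ and $\operatorname{supp} f\subseteq\overline{\rho(A)}$, with no ergodicity tension. The link to $A+B-B$ is then purely measure-theoretic: one shows the weak limit $F$ of $|\Phi_n|^{-1}\sum_{a\in A\cap\Phi_n}1_D\circ T^{-a}$ equals $(f*g)\circ\pi$ and is supported on $A\cdot D=\bigcup_{a\in A}T^aD$, so $f*g*g_{-}(\rho(\gamma))>0$ forces $\int F\cdot 1_D\circ T^{-\gamma}\,d\mu>0$, hence $\mu((A\cdot D)\cap T^{\gamma}D)>0$, hence $\gamma\in A+B-B$. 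No genericity of a base point and no characteristic-factor transfer to a two-parameter combinatorial count are needed.
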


We now prove Theorem \ref{thmMain} as a consequence of Proposition \ref{propComplete} and Lemma \ref{lemConvolution}.

\begin{proof}[Proof of Theorem \ref{thmMain}.]  Let $A, B \subseteq \Gamma$ have $d^{*}(A), d^{*}(B)>0$.

Set $\delta := \min(d^{*}(A),d^{*}(B))$, and let  $Z, \rho, f,$ and $g$ be as in Proposition \ref{propComplete}.  Let
 \begin{align*}
   h&:=f*g*g_{-}\\
   \tilde{A}&:=\{z\in Z: f(z)>0\}\\
   U&:=\{z\in Z: h(z)>0\},
 \end{align*} so that $\tilde{A}\subseteq \overline{\rho(A)}$ and $\rho^{-1}(U)\subseteq A+B-B$. By Lemma \ref{lemConvolution}, there is an $\tilde{a}\in \tilde{A}$ such that $U-\tilde{a}$ contains a Bohr-$(k,\eta)$ set, where $k, \eta$ depend only on $\delta$.  Lemma \ref{lemBohrIsOpen} provides a neighborhood $V$ of $\tilde{a}$ such that $U-z$ contains a Bohr-$(k,\eta/2)$ set for all $z\in V$.  Since $\tilde{a}\in \overline{\rho(A)}$, there is an $a\in A$ such that $\rho(a)\in V$.  For such $a$, Lemma \ref{lemCompactToCountable} implies $\rho^{-1}(U)-a$ is a Bohr-$(k,\eta/2)$ set contained in $A+B-B-a$.
\end{proof}

\section{Correspondence principle and  proof of Proposition \ref{propComplete}}\label{secErgodic}
In this section we fix a countable abelian group $\Gamma$.  We will exploit the theory measure preserving actions of $\Gamma$, see \cite{EinsiedlerWard}, \cite{Furstenberg}, or \cite{Glasner} for general references, and \cite{GrIsr} or \cite{GrAdv} for similar applications.

\subsection{Measure preserving systems}

 A \emph{measure preserving $\Gamma$-system} (or briefly, \emph{$\Gamma$-system}) is a quadruple $(X,\mathscr X,\mu,T)$, where $(X,\mathscr X,\mu)$ is a probability measure space and $T$ is an action of $\Gamma$ on $X$ preserving $\mu$:
 \begin{equation}\label{eqnPreserveMeasure}
   \mu(T^{-\gamma} D)=\mu(D)
 \end{equation} for all measurable $D\subseteq X$ and all $\gamma\in \Gamma$.  Note that Equation (\ref{eqnPreserveMeasure}) yields the identities
 \begin{align}
   \mu(C\cap T^{-\gamma}D) &= \mu(T^{a}(C\cap T^{-\gamma}D))=\mu(T^{a}C \cap T^{a-\gamma}D),\\
   \label{eqnL2Preserved} \int f\cdot g\circ T^{\gamma} \,d\mu &= \int f\circ T^{-\gamma}\cdot g \,d\mu,
 \end{align}
 for all measurable $C, D\subseteq X$, all $f, g\in L^{2}(\mu)$, and all $a, \gamma\in \Gamma$.

A $\Gamma$-system is \emph{ergodic} if for every $D\subseteq X$ satisfying $\mu(D \triangle T^{\gamma}D)=0$ for all $\gamma\in \Gamma$, we have $\mu(D)=0$ or $\mu(D)=1$.

A \emph{factor} of a $\Gamma$-system $(X,\mathscr X,\mu,T)$ is a $\Gamma$-system $(Y,\mathscr Y,\nu,S)$ together with a \emph{factor map} $\pi:X\to Y$, defined for $\mu$-a.e. $x\in X$, such that
\begin{equation}\label{eqnEquivariant}
\pi(T^{\gamma}x)=S^{\gamma}\pi(x) \ \text{ for $\mu$-a.e.\ $x\in X$ and all $\gamma\in \Gamma$}.
\end{equation}
The space $L^{2}(\nu)$ may be identified with the subspace of $L^{2}(\mu)$ consisting of functions of the form $g\circ \pi$, where $g\in L^{2}(\nu)$.  The $\sigma$-algebra $\pi^{-1}(\mathscr Y)$ consists of those sets which are $\mu$-a.e.\ equal to a set of the form $\pi^{-1}(C)$, where $C\subseteq Y$.  Note that $L^{2}(\nu)$ may be identified with those elements of $L^{2}(\mu)$ which are $\pi^{-1}(\mathscr Y)$-measurable.  Let $P_{\mathscr Y}:L^{2}(\mu)\to L^{2}(\mu)$ denote the orthogonal projection onto the space of $\pi^{-1}(\mathscr Y)$-measurable functions.

 If $g\in L^{2}(\mu)$, let $\tilde{g}$ be the element of $L^{2}(\nu)$ satisfying $P_{\mathscr Y}g = \tilde{g}\circ \pi$. Note that if $f$ is $\pi^{-1}(\mathscr Y)$-measurable, we have
\begin{equation}\label{eqnFactorIntegrate}
  \int f\cdot g\circ T^{\gamma} \,d\mu = \int  \tilde{f}\cdot \tilde{g}\circ S^{\gamma} \,d\nu \ \ \text{ for all } \gamma\in \Gamma,
\end{equation}
where $\tilde{f}\in L^{2}(\nu)$ satisfies $f= \tilde{f}\circ \pi$.
\subsection{Group rotations}
A \emph{group rotation} is a $\Gamma$-system $(Z,\mathcal Z, m,R_{\rho})$, where $Z$ is a compact abelian group with normalized Haar measure $m$, $\rho: \Gamma\to Z$ is a homomorphism, and the action $R_{\rho}$ is given by $R_{\rho}^{\gamma}(z) = z+\rho(\gamma)$.  The group rotation $(Z,\mathcal Z, m,R_{\rho})$ is ergodic iff $\rho(\Gamma)$ is dense in $Z$.

The \emph{Kronecker factor} of  a $\Gamma$-system $\mathbf X$ is the maximal factor $\mathbf Y$ of $\mathbf X$ such that $\mathbf Y$ is isomorphic to a group rotation. When $\mathbf X$ is ergodic, such a factor always exists and is ergodic (although it may be trivial).  See \cite{EinsiedlerWard}, \cite{Furstenberg}, or \cite{Glasner} for the existence of the Kronecker factor and its properties.

\subsection{A correspondence principle}

The following lemma is standard, but we outline the proof for completeness.
\begin{lemma}\label{lemCorrespondence1}
  If $B\subseteq \Gamma$ has $d^{*}(B)>0$, there is an ergodic $\Gamma$-system $(X,\mathscr X,\mu,T)$ and a $D\subseteq X$ having $\mu(D)\geq d^{*}(B)$ such that $B-B$ contains $R(D):=\{\gamma: \mu(D\cap T^{\gamma}D)>0\}$.
\end{lemma}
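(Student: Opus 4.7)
This is a standard instance of the Furstenberg correspondence principle, and the plan is to realize $B$ as a cylinder in the shift system on $X := \{0,1\}^{\Gamma}$.  Equip $X$ with the product topology (it is compact metrizable because $\Gamma$ is countable) and with the shift action $(T^{\gamma}x)(h) := x(h+\gamma)$.  Let $\omega := \mathbf{1}_{B} \in X$ and set $D := \{x\in X : x(0) = 1\}$, a clopen cylinder.  The defining feature I would exploit is that $T^{h}\omega \in D \cap T^{\gamma}D$ forces two elements of $B$ differing by $\gamma$, so the orbit of $\omega$ meeting $D\cap T^{\gamma}D$ already places $\gamma$ in $B-B$.

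Next I would fix a F{\o}lner sequence $(\Phi_{n})$ along which $|B\cap \Phi_{n}|/|\Phi_{n}|\to d^{*}(B)$, form the empirical measures $\mu_{n} := |\Phi_{n}|^{-1}\sum_{\gamma\in \Phi_{n}}\delta_{T^{\gamma}\omega}$, and extract a weak-$*$ subsequential limit $\mu$.  Weak-$*$ compactness of probability measures on $X$ produces the limit, the F{\o}lner property gives $T$-invariance of $\mu$, and since $D$ is clopen, $\mu(D) = \lim_{n} \mu_{n}(D) = \lim_{n}|B\cap \Phi_{n}|/|\Phi_{n}| = d^{*}(B)$.

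I would then check $R(D)\subseteq B-B$ by a support argument.  Each $\mu_{n}$ is concentrated on the orbit $\{T^{\gamma}\omega : \gamma\in \Gamma\}$, so $\operatorname{supp}(\mu)$ is contained in its closure.  If $\mu(D\cap T^{\gamma}D)>0$, then the clopen set $D\cap T^{\gamma}D$ meets $\operatorname{supp}(\mu)$, and by density of the orbit in its closure there exists $h$ with $T^{h}\omega \in D\cap T^{\gamma}D$; inspecting the two cylinder conditions places $\gamma$ in $B-B$ as above.

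Finally, to upgrade $\mu$ to an ergodic measure I would invoke the ergodic decomposition $\mu = \int \mu_{t}\,d\lambda(t)$ (available for actions of countable amenable groups). The identity $\int \mu_{t}(D)\,d\lambda(t) = d^{*}(B)$ supplies some $t$ with $\mu_{t}(D) \geq d^{*}(B)$, and for $\lambda$-a.e.\ $t$ the support of $\mu_{t}$ lies in $\operatorname{supp}(\mu)$, hence in the orbit closure of $\omega$; the same support argument then gives $R_{\mu_{t}}(D)\subseteq B-B$.  The only mild obstacle I anticipate is this coordination at the last step: one must pick $t$ so that simultaneously $\mu_{t}(D)\geq d^{*}(B)$ and $\operatorname{supp}(\mu_{t})$ sits inside the orbit closure, but since each condition fails only on a $\lambda$-null set, generic $t$ suffices.
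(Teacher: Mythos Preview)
Your proof is correct and follows essentially the same route as the paper: the shift on $\{0,1\}^{\Gamma}$, the clopen cylinder $D$, empirical measures along a F{\o}lner sequence realizing $d^{*}(B)$, a weak-$*$ limit, and ergodic decomposition.  The one structural difference is that the paper restricts to the orbit closure $X$ of $1_{B}$ from the outset; then the implication ``$D\cap T^{\gamma}D\neq\varnothing$ in $X$ $\Rightarrow$ $\gamma\in B-B$'' is purely topological and independent of the measure, so after ergodic decomposition \emph{every} component supported on $X$ automatically satisfies $R(D)\subseteq B-B$, and one only needs to select a component with $\mu(D)\geq d^{*}(B)$.

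Your version on the full shift works but requires the extra support argument, and your final sentence contains a small imprecision: the condition $\mu_{t}(D)\geq d^{*}(B)$ need not hold off a $\lambda$-null set---it is only guaranteed on a set of \emph{positive} $\lambda$-measure (since $\int \mu_{t}(D)\,d\lambda(t)=d^{*}(B)$ rules out $\mu_{t}(D)<d^{*}(B)$ almost everywhere).  The support condition does hold $\lambda$-a.e., so the intersection of the two sets is still nonempty and your argument goes through.  Restricting to the orbit closure as the paper does simply removes this bookkeeping.
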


\begin{proof}
  Let $\Omega = \{0,1\}^{\Gamma}$ with the product topology, so that $\Omega$ is a compact metrizable space.  Let $T$ be the action of $\Gamma$ on $\Omega$ defined by $(T^{\beta}\omega)(\gamma):=\omega(\gamma+\beta)$. Let $\mathbf \Phi=(\Phi_{n})_{n\in \mathbb N}$ be a F{\o}lner sequence for $\Gamma$ such that $d_{\mathbf \Phi}(B)=d^{*}(B)$.  Consider $x:=1_{B}\in\Omega$, and let $X$ be the orbit closure of $x$ under $T$, meaning $X$ is the closure of the set $\{T^{\gamma}x: \gamma\in \Gamma\}$.  Let $D$ be the set $\{\omega\in X: \omega(0)=1\}$, so that $D$ is a clopen subset of $X$.  Note that $B=\{\gamma: T^{\gamma}x\in D\}$.

  We will find a $T$-invariant measure $\mu$ on $X$ such that $\mu(D)=d^{*}(B)$.  Let $\delta_{x}$ be the Dirac mass concentrated at $x$, and for each $n$, let $\nu_{n}:= \frac{1}{|\Phi_{n}|}\sum_{\gamma\in \Phi_{n}}\delta_{T^{\gamma}x}$. Let $\nu$ be a $\text{weak}^{*}$ limit of the $\nu_{n}$.  Then $\nu$ is a $T$-invariant probability measure, while
  \[\nu(D) = \lim_{n\to \infty} \frac{1}{|\Phi_{n}|}|\{\gamma\in \Phi_{n}: T^{\gamma}x\in D\}| = \lim_{n\to \infty} \frac{|B\cap \Phi_{n}|}{|\Phi_{n}|} = d^{*}(B).\]
Applying ergodic decomposition (\cite{EinsiedlerWard}, Theorem 8.20), we may find an ergodic $T$-invariant measure $\mu$ on $X$ such that $\mu(D)\geq \nu(D)$.

We now show that $\mu(D\cap T^{\gamma}D)>0$ implies $\gamma\in B-B$.  In fact, if $D\cap T^{\gamma}D\neq \varnothing$, then there exists $y\in D$ such that $T^{\gamma}y\in D$.  Since $y$ is a limit of points of the form $T^{\beta}x$ and $D$ is open, there exist $\beta\in \Gamma$ such that $T^{\beta}x\in D$ and $T^{\gamma}T^{\beta}x\in D$, which implies $\beta\in B$ and $\gamma+\beta\in B$. We then have $\gamma\in B-B$.
\end{proof}

If $D\subseteq X$ and $A\subseteq \Gamma$, let $A\cdot D:= \bigcup_{a\in A}T^{a}D$.

\begin{lemma}\label{lemCorrespondence2} If $\mathbf X$ is the $\Gamma$-system obtained in Lemma \ref{lemCorrespondence1} and $A\subseteq \Gamma$, then $A+B-B$ contains $\{\gamma\in \Gamma: \mu((A\cdot D)\cap T^{\gamma}D)>0\}$.
\end{lemma}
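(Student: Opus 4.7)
The plan is to reduce this directly to the proof of Lemma \ref{lemCorrespondence1} by peeling off an element of $A$. Suppose $\gamma \in \Gamma$ satisfies $\mu((A\cdot D)\cap T^{\gamma}D)>0$. Since $\Gamma$ is countable, $A$ is countable, so $A\cdot D = \bigcup_{a\in A} T^{a}D$ is a countable union of measurable sets, and countable subadditivity forces the existence of some $a\in A$ with $\mu(T^{a}D \cap T^{\gamma}D) > 0$.

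Next I would apply the measure-preservation identity $\mu(T^{a}D \cap T^{\gamma}D) = \mu(D \cap T^{\gamma - a}D)$ (the first identity displayed just after the definition of a $\Gamma$-system, with shift by $-a$). Thus $\mu(D \cap T^{\gamma-a}D) > 0$, and in particular $D \cap T^{\gamma - a}D$ is a nonempty set. Because $D$ is a clopen subset of $X$ (from the construction in Lemma \ref{lemCorrespondence1}) and $X$ is the orbit closure of $x = 1_{B}$, the point $T^{-\beta}x$ comes arbitrarily close to any $y \in D \cap T^{\gamma-a}D$ for suitable $\beta$; openness of $D$ and of $T^{\gamma-a}D$ then yields a single $\beta \in \Gamma$ with both $T^{\beta}x \in D$ and $T^{\gamma - a + \beta}x \in D$. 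This is exactly the argument given in the last paragraph of the proof of Lemma \ref{lemCorrespondence1}, and it shows $\beta \in B$ and $\gamma - a + \beta \in B$, hence $\gamma - a \in B - B$.

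Finally, I would assemble: $\gamma = a + (\gamma - a) \in A + (B - B) = A + B - B$, which is the desired containment. There is no real obstacle here — the proof is a one-step extension of Lemma \ref{lemCorrespondence1}, the only substantive points being (i) to invoke countability of $A$ so that positivity of the measure of the union passes to an individual $a$, and (ii) to use the measure-preservation identity to absorb the translate $T^{a}$. The rest is quoting the topological-dynamics argument already used for Lemma \ref{lemCorrespondence1}.
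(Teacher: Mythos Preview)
Your proof is correct and follows essentially the same route as the paper's: pass from the union to a single $a\in A$ by countable subadditivity, shift by $-a$ using measure preservation, and then conclude $\gamma-a\in B-B$. The only difference is cosmetic: the paper simply invokes the conclusion of Lemma~\ref{lemCorrespondence1} (that $\mu(D\cap T^{\gamma-a}D)>0$ implies $\gamma-a\in R(D)\subseteq B-B$) rather than re-running the clopen/orbit-closure argument, so your middle paragraph is redundant but not wrong.
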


\begin{proof}
Note that $\mu(T^{a}D\cap T^{\gamma}D)>0$ iff $\mu(D\cap T^{\gamma-a}D)>0$, which implies $\gamma-a \in B-B$, meaning $\gamma\in a+B-B$.  It follows that $\mu((A\cdot D) \cap T^{\gamma}D)>0$ implies $\gamma\in A+B-B$.
\end{proof}

\subsection{Proof of Proposition \ref{propComplete}}

From now on we fix:

\begin{enumerate}
\item[$\bullet$] $A$, $B\subseteq \Gamma$ having $d^{*}(A)>0$, $d^{*}(B)>0$,

\item[$\bullet$] a F{\o}lner sequence $\mathbf \Phi$ satisfying $d_{\mathbf \Phi}(A)=d^{*}(A)$,

\item[$\bullet$]  an ergodic $\Gamma$-system $\mathbf X= (X,\mathscr X,\mu,T)$ and a set $D\subseteq X$ satisfying the conclusion of Lemma \ref{lemCorrespondence2},

\item[$\bullet$]  the Kronecker factor $\mathbf Z=(Z,\mathcal Z, m,R_{\rho})$ of $(X,\mathscr X,\mu,T)$, with factor map $\pi:X\to Z$.  Note that $\rho:\Gamma\to Z$ is a homomorphism with $\rho(\Gamma)$ dense in $Z$.

\item[$\bullet$]  a function $g:Z\to [0,1]$ satisfying $g\circ \pi = P_{\mathcal Z}1_{D}$ $\mu$-almost everywhere.

\end{enumerate}

Here $P_{\mathcal Z}:L^{2}(\mu)\to L^{2}(\mu)$ is the orthogonal projection onto the space of $\pi^{-1}(\mathcal Z)$-measurable functions.  As a special case of Equation (\ref{eqnEquivariant}), we get
\begin{align}\label{eqnKroneckerFactor}
g(\pi(x)+\rho(\gamma))=P_{\mathcal Z}1_{D}(T^{\gamma}x) \text{ for $\mu$-a.e. } x \text{ and all } \gamma\in \Gamma.
\end{align}
Furthermore, if $F$ is $\pi^{-1}(\mathcal Z)$-measurable, meaning $F=\tilde{F}\circ \pi$ for some $\tilde{F}\in L^{2}(m)$, then Equation (\ref{eqnFactorIntegrate}) implies
\begin{equation}\label{eqnKroneckerIntegrate}
  \int F\cdot 1_{D}\circ T^{-\gamma} \,d\mu = \int \tilde{F} \cdot g\circ R_{\rho}^{-\gamma}\, dm = \int \tilde{F}(z) \cdot g(z-\rho(\gamma)) \,dm(z)
\end{equation}
for all $\gamma\in \Gamma$.

The lemmas and proofs in the remainder of this section will refer to the objects defined above.  Our goal, in light of Lemma \ref{lemCorrespondence2}, is to describe those $\gamma$ for which $\mu((A\cdot D)\cap T^{-\gamma}D)>0$.  Our approach is similar to the proofs of Proposition 3.2 of \cite{GrIsr} and Proposition 4.2 of \cite{GrAdv}.

In order to understand $A\cdot D$, we consider averages of functions supported on $A\cdot D$.  We have fixed a F{\o}lner sequence $\mathbf \Phi=(\Phi_{n})_{n\in \mathbb N}$ such that $d_{\mathbf \Phi}(A)=d^{*}(A)$.  Consider the sets $A_{n}:=A\cap \Phi_{n}$.  Observe that $1_{D}\circ T^{-a}$ is supported on $A\cdot D$ for each $a\in A$, so each average
\begin{equation}\label{eqnDefAvg}
F_{n}:=\frac{1}{|\Phi_{n}|}\sum_{a\in A_{n}} 1_{D}\circ T^{-a}
\end{equation}
is supported on $A\cdot D$, and every weak $L^{2}(\mu)$ limit of these averages is also supported on $A\cdot D$.  Passing to a subsequence of $\mathbf \Phi$, we may assume that the limit of $(F_{n})_{n\in \mathbb N}$ exists. Analyzing this limit will lead to the following lemma.  See Section \ref{secFourier} for the definitions of $f*g$ and $g_{-}.$

\begin{lemma}\label{lemRecurrenceToSumset}
\begin{enumerate}
   \item[(i)] The set $A\cdot D$ supports a function of the form $f*g\circ \pi$, where $f:Z\to [0,1]$ has $\int f \,dm = d^{*}(A)$, $f$ is supported on $\overline{\rho(A)}$, and $g$ is defined above.
       \smallskip
   \item[(ii)] $\{\gamma\in \Gamma: \mu((A\cdot D)\cap T^{\gamma}D)>0\}\supseteq \{\gamma: f*g*g_{-}(\rho(\gamma))>0\}$.

\end{enumerate}

\end{lemma}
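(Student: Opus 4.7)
My plan is to identify the weak $L^2(\mu)$ limit $F$ of the averages $F_n := |\Phi_n|^{-1}\sum_{a \in A_n} 1_D \circ T^{-a}$ from the excerpt as a Kronecker-measurable function of the form $(f*g) \circ \pi$, after which both parts will follow directly. I pass to a subsequence of $\mathbf \Phi$ along which $F_n \to F$ weakly in $L^2(\mu)$ and the pushforward measures $\lambda_n := |\Phi_n|^{-1}\sum_{a \in A_n} \delta_{\rho(a)}$ converge weakly-$*$ to a positive Borel measure $\lambda$ on $Z$. The Følner averages $\mu_n := |\Phi_n|^{-1}\sum_{\gamma \in \Phi_n} \delta_{\rho(\gamma)}$ converge weakly-$*$ to Haar measure $m$ (by the Følner property together with density of $\rho(\Gamma)$ in $Z$), and since $\lambda_n \leq \mu_n$ we obtain $\lambda \leq m$. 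Setting $f := d\lambda/dm$ gives a function with values in $[0,1]$, $\int f\,dm = d^*(A)$, and $f$ supported on $\overline{\rho(A)}$. Decomposing $F_n = P_{\mathcal Z} F_n + F_n^\perp$ and using $P_{\mathcal Z} 1_D = g \circ \pi$ with (\ref{eqnKroneckerFactor}), one computes $P_{\mathcal Z} F_n = (\lambda_n * g) \circ \pi$; a Plancherel estimate exploiting $|\hat g|^2 \in \ell^1(\widehat{Z})$ then shows $P_{\mathcal Z} F_n \to (f*g) \circ \pi$ in $L^2(\mu)$.

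The main obstacle is proving $\|F_n^\perp\|_2 \to 0$. With $\phi := 1_D - g \circ \pi$, by the spectral theorem $\|F_n^\perp\|_2^2 = \int_{\widehat \Gamma} |c_n(\chi)|^2\, d\sigma_\phi(\chi)$, where $c_n(\chi) := |\Phi_n|^{-1}\sum_{a \in A_n}\chi(a)$ and $\sigma_\phi$ is the spectral measure of $\phi$; ergodicity of $\mathbf X$ combined with maximality of the Kronecker factor forces $\sigma_\phi$ to be atomless. To control the $c_n$, I lift to the Bohr compactification $\bar\Gamma$ via the canonical dense embedding $\iota : \Gamma \to \bar\Gamma$: the measures $\bar\lambda_n := |\Phi_n|^{-1}\sum_{a \in A_n}\delta_{\iota(a)}$ converge weakly-$*$ along a further subsequence to some $\bar\lambda \leq \bar m$ (with $\bar m$ Haar on $\bar\Gamma$) by the same Følner-domination argument. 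Hence $\bar\lambda$ has density in $L^2(\bar m)$, so the pointwise limit $c(\chi) := \int \chi\, d\bar\lambda$ lies in $\ell^2(\widehat \Gamma)$ and is nonzero on at most a countable set. Atomlessness of $\sigma_\phi$ yields $\int |c|^2\, d\sigma_\phi = 0$, and dominated convergence (using $|c_n| \leq 1$ and $c_n(\chi) \to c(\chi)$ pointwise) delivers $\|F_n^\perp\|_2 \to 0$.

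Combining the above, $F_n \to F = (f*g) \circ \pi$ strongly in $L^2(\mu)$, with $F$ supported on $A \cdot D$ and $F \in [0,1]$; this establishes (i). For (ii), applying (\ref{eqnKroneckerIntegrate}) to the Kronecker-measurable $F$ gives $(f*g*g_-)(\rho(\gamma)) = \int F \cdot 1_D \circ T^{-\gamma}\,d\mu$, and the bound $F \leq 1_{A \cdot D}$ yields $(f*g*g_-)(\rho(\gamma)) \leq \mu((A \cdot D) \cap T^\gamma D)$, so positivity of the left side forces positivity of the right.
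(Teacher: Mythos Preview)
Your argument is correct and follows the same overall architecture as the paper: identify the weak limit $F$ of the averages $F_n$ as $(f\ast g)\circ\pi$ via the Kronecker factor, with $f$ obtained as the Radon--Nikodym derivative of a weak-$*$ limit of the empirical measures $\lambda_n$ on $Z$; then deduce (ii) from $\int F\cdot 1_D\circ T^{-\gamma}\,d\mu = f\ast g\ast g_-(\rho(\gamma))$ and the fact that $F$ is supported on $A\cdot D$. The paper packages the identification of $F$ into a separate lemma (Lemma~\ref{lemWeakLimits}) and constructs $f$ exactly as you do (Lemma~\ref{lemWeakStar}).

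The one substantive difference is in how the orthogonal component is handled. The paper does not argue directly that $F_n^\perp\to 0$; instead it invokes Corollary~2.7 of \cite{GrAdv} to assert that the weak limit $F$ is $\pi^{-1}(\mathcal Z)$-measurable, and then identifies $F$ by testing against continuous $\psi=\tilde\psi\circ\pi$. You instead give a self-contained spectral-theoretic argument: writing $\|F_n^\perp\|_2^2=\int_{\widehat\Gamma}|c_n|^2\,d\sigma_\phi$, you lift the empirical measures to the Bohr compactification $\bar\Gamma$, use the same F\o lner-domination trick there to get $\bar\lambda\leq \bar m$, and conclude that the pointwise limits $c(\chi)$ lie in $\ell^2(\widehat\Gamma)$ and hence vanish $\sigma_\phi$-a.e.\ by atomlessness. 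This is a genuine (and elegant) addition: it makes the proof independent of the cited reference and in fact upgrades the convergence of $F_n$ from weak to strong. The paper's route is shorter on the page but leans on external work; yours is longer but fully self-contained and yields a slightly stronger conclusion.
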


\begin{proof}  To prove (i),  let $F$ be a weak $L^{2}(\mu)$ limit of the sequence $(F_{n})_{n\in \mathbb N}$, defined in Equation (\ref{eqnDefAvg}).  Then $F$ is supported on $A\cdot D$, and by Lemma \ref{lemWeakLimits}, $F$ has the form $f*g\circ \pi$, where $f$ is as described in Part (ii) of that lemma.

Proof of Part (ii).  We must prove the implication
\begin{equation}\label{eqnTheImplication}
  f*g*g_{-}(\rho(\gamma))>0 \implies \mu((A\cdot D)\cap T^{\gamma}D)>0.
\end{equation} Suppose $\gamma\in \Gamma$ satisfies
\begin{equation}\label{eqnIntersectionIntegral}
  \int (f*g\circ \pi) \cdot 1_{D}\circ T^{-\gamma} \,d\mu>0.
\end{equation}  Equation (\ref{eqnKroneckerIntegrate}) implies the integral in Inequality (\ref{eqnIntersectionIntegral}) is positive if and only if $\int f*g(z)\cdot g(z-\rho(\gamma))  \,dm(z)>0,$ meaning $f*g*g_{-}(\rho(\gamma))>0$.  By Part (i), inequality (\ref{eqnIntersectionIntegral}) implies $\mu((A\cdot D)\cap T^{\gamma}D)>0$, so we have proven the implication (\ref{eqnTheImplication}).
   \end{proof}

\begin{proof}[Proof of Proposition \ref{propComplete}]
Proposition \ref{propComplete} now follows from Lemmas \ref{lemCorrespondence2} and \ref{lemRecurrenceToSumset}.
\end{proof}

\begin{lemma}\label{lemWeakLimits}
If the weak $L^2(\mu)$ limit $F$ of the sequence \[F_n:=\frac{1}{|\Phi_n|}\sum_{a\in A_{n}} 1_{D}\circ T^{-a}\] exists and $g:Z\to [0,1]$ is the function satisfying Equation \textup{(\ref{eqnKroneckerFactor})}, then

\begin{enumerate}
\item[(i)]
$F$ is $\pi^{-1}(\mathcal Z)$-measurable, and
\begin{align*}
  F = \lim_{n\to \infty} \frac{1}{|\Phi_{n}|} \sum_{a \in A_{n}} g\circ R_{\rho}^{-1}\circ \pi \text{ weakly in } L^{2}(\mu).
\end{align*}

\smallskip

\item[(ii)] $F=(f*g) \circ \pi$, where $f:Z\to [0,1]$ satisfies $\int f \,dm = d^{*}(A)$ and $f$ is supported on $\overline{\rho(A)}$. \end{enumerate}
\end{lemma}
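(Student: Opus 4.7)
The plan is to derive Part (i) by decomposing $F_{n}$ along the Kronecker factor, and Part (ii) by interpreting the resulting projection as a convolution of $g$ with a weak limit of point-mass measures on $Z$.

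For Part (i), I would first observe that $\pi^{-1}(\mathcal Z)$ is $T$-invariant, so the orthogonal projection $P_{\mathcal Z}$ commutes with every $T^{\gamma}$. Applying $P_{\mathcal Z}$ to (\ref{eqnDefAvg}) and using Equation (\ref{eqnKroneckerFactor}) yields
\[P_{\mathcal Z}F_{n}=\frac{1}{|\Phi_{n}|}\sum_{a\in A_{n}}(g\circ\pi)\circ T^{-a}=\frac{1}{|\Phi_{n}|}\sum_{a\in A_{n}}g\circ R_{\rho}^{-a}\circ\pi,\]
and by weak continuity of $P_{\mathcal Z}$ we get $P_{\mathcal Z}F_{n}\to P_{\mathcal Z}F$ weakly. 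To conclude that $F=P_{\mathcal Z}F$ it suffices to show $\langle F_{n},\phi\rangle\to 0$ for every $\phi\in L^{2}(\mu)$ with $P_{\mathcal Z}\phi=0$. Writing $h:=1_{D}-g\circ\pi$, the hypothesis on $\phi$ together with $T$-invariance of $\pi^{-1}(\mathcal Z)$ gives $\langle 1_{D},\phi\circ T^{a}\rangle=\langle h,\phi\circ T^{a}\rangle$, and Cauchy--Schwarz then bounds
\[|\langle F_{n},\phi\rangle|^{2}\;\le\;\frac{1}{|\Phi_{n}|}\sum_{\gamma\in\Phi_{n}}|\langle h,\phi\circ T^{\gamma}\rangle|^{2}.\]
Since $h$ and $\phi$ both lie in the orthogonal complement of the Kronecker factor of the ergodic system $\mathbf X$, the Jacobs--de Leeuw--Glicksberg (compact/weakly-mixing) decomposition forces the Ces\`aro mean on the right to tend to $0$, completing Part (i).

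For Part (ii), set $\tilde F_{n}(z):=\frac{1}{|\Phi_{n}|}\sum_{a\in A_{n}}g(z-\rho(a))$, so that $P_{\mathcal Z}F_{n}=\tilde F_{n}\circ\pi$, and let $\nu_{n}:=\frac{1}{|\Phi_{n}|}\sum_{a\in A_{n}}\delta_{\rho(a)}$, whence $\tilde F_{n}=g*\nu_{n}$ as functions on $Z$. The total mass $\nu_{n}(Z)=|A_{n}|/|\Phi_{n}|$ tends to $d^{*}(A)$, and $\nu_{n}\le\tilde\nu_{n}:=\frac{1}{|\Phi_{n}|}\sum_{\gamma\in\Phi_{n}}\delta_{\rho(\gamma)}$ as measures. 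Density of $\rho(\Gamma)$ in $Z$ makes $R_{\rho}$ minimal and hence uniquely ergodic, so $\tilde\nu_{n}\to m$ weakly against $C(Z)$. Passing to a subsequence, $\nu_{n}\to\lambda$ weakly; the domination $\nu_{n}\le\tilde\nu_{n}$ together with the portmanteau theorem forces $\lambda\le m$, so $d\lambda=f\,dm$ for a Borel $f\colon Z\to[0,1]$ with $\int f\,dm=d^{*}(A)$, and $f$ is supported on $\overline{\rho(A)}$ since each $\nu_{n}$ is.

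Finally, for every $\psi\in L^{2}(m)$ the map $y\mapsto\int\psi(z)g(z-y)\,dm(z)$ is continuous on the compact group $Z$, so weak convergence $\nu_{n}\to\lambda$ upgrades to weak $L^{2}(m)$ convergence $g*\nu_{n}\to g*\lambda=f*g$; pulling back through $\pi$ and matching with Part (i) identifies $F=(f*g)\circ\pi$. I expect the main obstacle to be Part (i), since one is averaging only over the subset $A_{n}\subseteq\Phi_{n}$ rather than a genuine F\o{}lner sequence; the trick that unlocks it is using Cauchy--Schwarz to absorb the density $|A_{n}|/|\Phi_{n}|$ into a standard Ces\`aro mean of squared correlations over $\Phi_{n}$, to which the weak-mixing property on the orthogonal complement of the Kronecker factor applies.
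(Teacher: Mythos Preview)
Your argument is correct and structurally parallel to the paper's: both identify $F$ through the Kronecker projection and then express it as a convolution of $g$ against a weak$^*$ limit of the empirical measures $\nu_n=\frac{1}{|\Phi_n|}\sum_{a\in A_n}\delta_{\rho(a)}$ on $Z$. The difference is one of self-containment. For Part~(i) the paper simply invokes Corollary~2.7 of \cite{GrAdv}, while you supply a direct proof via the Cauchy--Schwarz step
\[
|\langle F_n,\phi\rangle|^2\le \frac{1}{|\Phi_n|}\sum_{\gamma\in\Phi_n}|\langle h,\phi\circ T^\gamma\rangle|^2
\]
together with the weak-mixing characterization of the orthogonal complement of the Kronecker factor (valid for any F{\o}lner sequence on the ergodic system $\mathbf X$). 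Likewise, for the absolute continuity $\lambda\le m$ and the bound $0\le f\le 1$, the paper defers to Lemma~2.11 of \cite{GrAdv}, whereas you derive it from unique ergodicity of the minimal rotation $R_\rho$ (so that the full F{\o}lner averages $\tilde\nu_n\to m$) combined with the domination $\nu_n\le\tilde\nu_n$. Your route buys independence from the cited reference; the paper's is shorter on the page.
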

\begin{proof}
  Part (i) is a consequence of the proof of Corollary 2.7 of \cite{GrAdv}. To prove Part (ii), it suffices to show that
\begin{equation}\label{eqnInnerProd}
    \int F\cdot \psi \,d\mu= \int  (f*g)\circ \pi \cdot \psi  \,d\mu \text{\ \ for every } \psi\in L^{2}(\mu).
  \end{equation}Part (i) already proves Equation (\ref{eqnInnerProd}) for those $\psi$ orthogonal to the $\pi^{-1}(\mathcal Z)$-measurable functions, so we may assume that $\psi$ is $\pi^{-1}(\mathcal Z)$-measurable, meaning $\psi = \tilde{\psi}\circ \pi$ for some $\tilde{\psi} \in L^{2}(m)$.  It suffices to establish the identity (\ref{eqnInnerProd}) for an $L^{2}(\mu)$-dense set of functions $\psi$, so we may assume that $\psi=\tilde{\psi}\circ \pi$, where $\tilde{\psi}:Z\to \mathbb C$ is continuous.  Let $F$ be a weak $L^{2}(\mu)$ limit of the $(F_{n})_{n\in \mathbb N}$, and let $\nu$ be a measure as in the conclusion of Lemma \ref{lemWeakStar}, so that $d\nu =  f \,dm$ for some $f$ as in the conclusion of the  lemma. Then we apply Part (i) to compute $F$:
  \begin{align}
    \int F \cdot \psi \,d\mu &= \lim_{n\to \infty} \frac{1}{|\Phi_{n}|} \sum_{a\in A_{n}} \int g(\pi(x)-\rho(a)) \cdot \tilde{\psi}(\pi(x)) \,d\mu(x)\\
    &= \lim_{n\to \infty} \frac{1}{|\Phi_{n}|}\sum_{a\in A_{n}}  \int g(\pi(x)) \cdot \tilde{\psi}(\pi(x)+\rho(a))\,d\mu(x)\\
    &= \lim_{n\to \infty} \int  g(z) \cdot \frac{1}{|\Phi_{n}|}\sum_{a\in A_{n}} \tilde{\psi}(z+\rho(a)) \,dm(z). \label{eqnLast}
    \end{align}
We used Equations (\ref{eqnL2Preserved}) and (\ref{eqnKroneckerIntegrate}) to get the second and third lines. Applying Lemma \ref{lemWeakStar} to evaluate the limit in Equation (\ref{eqnLast}), we  find
\[\lim_{n\to \infty} \frac{1}{|\Phi_{n}|} \sum_{a\in A_{n}}\tilde{\psi}(z+\rho(a)) = \int \tilde{\psi}(z+w)f(w) \,dm(w) \ \text{ for all } z\in Z,
\]  where $f$ is as described above. We then evaluate, continuing from Equation (\ref{eqnLast}),
   \begin{align*}
   \int F\cdot \psi \,d\mu &= \int g(z) \cdot \int \tilde{\psi}(z+w)f(w) \,dm(w) \,dm(z)\\
    &= \int \int f(w)g(z-w) \,dm(w)\, \tilde{\psi}(z)\, dm(z)\\
    &= \int  f*g(z) \cdot \tilde{\psi}(z) \,dm(z)\\
    &= \int  (f*g)\circ \pi \cdot \psi \,d\mu,
  \end{align*}
establishing Equation (\ref{eqnInnerProd}).
The second line uses Fubini and translation invariance of $dm$, the third is just the definition of $f*g$, and the last line is the special case of Equation (\ref{eqnFactorIntegrate}) with $\gamma=0$.
\end{proof}

\begin{lemma}\label{lemWeakStar}  Consider the Borel measures $\nu_{n}$ on $Z$ given by
\begin{align*}
  \int \psi \,d\nu_{n}= \frac{1}{|\Phi_{n}|} \sum_{\gamma\in A_{n}} \psi(\rho(\gamma))  && \text{ for } \psi:Z\to \mathbb C.
\end{align*}
   Let $\nu$ be a $\text{weak}^{*}$ limit of the $\nu_{n}$.  Then $\nu$ is absolutely continuous with respect to Haar measure $m$, and
  \begin{enumerate}
     \item[(i)]   $d\nu = f\, dm$, where $f: Z\to [0,1]$,
     \item[(ii)] $\nu(Z)=d^{*}(A)$,
     \item[(iii)] $f$ is supported on $\overline{\rho(A)}$.
  \end{enumerate}
\end{lemma}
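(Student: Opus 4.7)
The plan is to dispose of parts (ii) and (iii) quickly from the construction and weak* convergence, and to derive the absolute continuity in part (i) by comparing $\nu_n$ with a larger auxiliary sequence that equidistributes to Haar measure.

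First I would handle (ii): taking $\psi\equiv 1$ gives $\nu_n(Z)=|A_n|/|\Phi_n|$, which tends to $d^*(A)$ since $\mathbf{\Phi}$ was chosen with $d_{\mathbf{\Phi}}(A)=d^*(A)$; weak* convergence evaluated on the constant function $1$ forces $\nu(Z)=d^*(A)$. For (iii), each $\nu_n$ is supported on $\rho(A_n)\subseteq\overline{\rho(A)}$, so for every $\psi\in C(Z)$ vanishing on the closed set $\overline{\rho(A)}$ we have $\int\psi\,d\nu_n=0$, and hence $\int\psi\,d\nu=0$; once (i) is established, this lets us choose the representative of $f$ to vanish off $\overline{\rho(A)}$.

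For (i) the main device is the auxiliary sequence $\mu_n:=\frac{1}{|\Phi_n|}\sum_{a\in\Phi_n}\delta_{\rho(a)}$, summed over all of $\Phi_n$ rather than only over $A_n$. Since $A_n\subseteq\Phi_n$, as positive Borel measures we have $\nu_n\leq\mu_n$. I would then prove $\mu_n\to m$ weak*: for any $\psi\in C(Z)$ and $\gamma\in\Gamma$, a change of summation index combined with the F{\o}lner property yields
$$\left|\int\psi(z+\rho(\gamma))\,d\mu_n(z)-\int\psi(z)\,d\mu_n(z)\right|\leq \|\psi\|_\infty\frac{|\Phi_n\triangle(\Phi_n+\gamma)|}{|\Phi_n|}\to 0,$$
so any weak* subsequential limit $\mu_\infty$ is translation invariant under every element of $\rho(\Gamma)$. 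Continuity of translation together with density of $\rho(\Gamma)$ in $Z$ promote this to invariance under all of $Z$, forcing $\mu_\infty=m$ by uniqueness of Haar probability measure; since every subsequential weak* limit equals $m$, the whole sequence converges weak* to $m$. Passing to weak* limits in the inequality $\int\psi\,d\nu_n\leq\int\psi\,d\mu_n$ for nonnegative $\psi\in C(Z)$ gives $\int\psi\,d\nu\leq\int\psi\,dm$, equivalent to $\nu\leq m$ as Borel measures; hence $\nu\ll m$ with Radon--Nikodym derivative $f:=d\nu/dm$ taking values in $[0,1]$ almost everywhere.

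The only substantive step is the equidistribution $\mu_n\to m$, and this is a short combination of the F{\o}lner hypothesis on $\mathbf{\Phi}$ with the density of $\rho(\Gamma)$ (equivalent to ergodicity of the group rotation $R_\rho$). Everything else is routine weak*-limit bookkeeping; the only subtlety worth flagging is that the sequence $(\nu_n)$ itself need not converge, so the statement must be read, and all three conclusions proved, for an arbitrary weak* limit point $\nu$ (extracted by compactness of the unit ball in $C(Z)^*$).
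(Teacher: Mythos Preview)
Your proof is correct. The paper does not give its own argument for (i) and (ii), deferring instead to Lemma~2.11 of \cite{GrAdv}; your self-contained route (dominating $\nu_n$ by the full F{\o}lner averages $\mu_n$, showing $\mu_n\to m$ via the F{\o}lner property and density of $\rho(\Gamma)$, then passing the inequality $\nu_n\le\mu_n$ to the weak* limit to get $\nu\le m$) is exactly the standard argument one expects that citation to contain. For (iii) your reasoning---$\int\psi\,d\nu=0$ for continuous $\psi$ vanishing on $\overline{\rho(A)}$---is verbatim what the paper says.
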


\begin{proof}  Parts (i) and (ii) are established in the proof of Lemma 2.11 of \cite{GrAdv}. Part (iii) follows from the fact that $\int \psi \,d\nu=0$ whenever $\psi: Z\to \mathbb C$ is a continuous function which vanishes on $\overline{\rho(A)}$. \end{proof}

\bibliographystyle{amsplain}
\bibliography{CDbib}

\end{document}